\newtheorem{theorem}{Theorem}
\newtheorem*{theorem*}{Theorem}
\newtheorem{proposition}[theorem]{Proposition}
\newtheorem{lemma}[theorem]{Lemma}
\theoremstyle{remark}
\newtheorem{remark}[theorem]{Remark}
\newtheorem{example}[theorem]{Example}
\newtheorem{question}[theorem]{Question}
\newcommand\CC{{\mathbb C}}
\newcommand\FF{{\mathbb F}}
\newcommand\KK{{\mathbb K}}
\newcommand\LL{{\mathbb L}}
\newcommand\QQ{{\mathbb Q}}
\newcommand\RR{{\mathbb R}}
\newcommand\Sph{{\mathbb S}}
\newcommand\cC{{\mathcal C}}
\newcommand\1{{\mathbb 1}}
\newcommand\koebe{Q}
 \newcommand\norm[1]{|\hskip-.2ex|#1|\hskip-.2ex|}
\newcommand\SetOf[2]{\left\{#1 \mid #2\right\}}
\newcommand\smallSetOf[2]{\{#1 \mid #2\}}
\newcommand\transpose[1]{{#1}^{\top}}
\newcommand\colvectwo[2]{\begin{pmatrix} #1 \\ #2 \end{pmatrix}}
\DeclareMathOperator\GL{GL}
\DeclareMathOperator\PGL{PGL}
\DeclareMathOperator\PSL{PSL}
\DeclareMathOperator\SL{SL}
\DeclareMathOperator\SO{SO}
\DeclareMathOperator\Orth{O}
\DeclareMathOperator\diag{diag}
\DeclareMathOperator\vol{vol}
\DeclareMathOperator\conv{conv}
\newenvironment{dedication}
  {\vspace*{0.2cm}% some space at the top 
   \itshape             % the text is in italics
   \raggedright         % flush to the right margin
  }
  {\par % end the paragraph
   \vspace{0.2cm} % space at bottom is three times that at the top
  }
\definecolor{amber}{rgb}{1.0, 0.75, 0.0}
\definecolor{brightube}{rgb}{0.82, 0.62, 0.91}
\title{Algebraic degrees of $3$-dimensional polytopes}
\author{Mara Belotti \and Michael Joswig \and Marta Panizzut} 
\address[Mara Belotti, Marta Panizzut]{
  Technische Universität Berlin,
  Chair of Discrete Mathematics/Geometry \\
  \texttt{\{belotti,panizzut\}@math.tu-berlin.de}
}
\address[Michael Joswig]{
  Technische Universität Berlin,
  Chair of Discrete Mathematics/Geometry \\
  Max-Planck Institute for Mathematics in the Sciences, Leipzig \\
  \texttt{joswig@math.tu-berlin.de}
}
\thanks{This research has been funded by the Deutsche Forschungsgemeinschaft (DFG, German Research Foundation) under Germany's Excellence Strategy – The Berlin Mathematics Research Center MATH$^+$ (EXC-2046/1, project ID 390685689). M. Joswig is also founded by the Deutsche Forschungsgemeinschaft (DFG, German Research Foundation) – Project-ID 286237555 – TRR 195, and supported by the GRK 2434: \enquote{Facets of Complexity}.}
\subjclass[2020]{52B10, 14P10}
\begin{document}

\begin{abstract}
  Results of Koebe (1936), Schramm (1992), and Springborn (2005) yield realizations of $3$-polytopes with edges tangent to the unit sphere.
  Here we study the algebraic degrees of such realizations.
  This initiates the research on constrained realization spaces of polytopes.
\end{abstract}

\maketitle
\begin{dedication}
Dedicated to Bernd Sturmfels on the occasion of his 60th birthday.
\end{dedication}
\section{Introduction} 
\noindent
Two (convex) polytopes are \emph{combinatorially equivalent} if their face lattices are isomorphic.
The \emph{realization space} of a polytope $P$ is the set of all polytopes which are combinatorially equivalent to $P$.
Making this definition rigorous requires to specify how $P$ is given exactly.
There are several natural choices, e.g., in terms of the coordinates of the vertices, the facets or both, which yield semialgebraic sets.
From work of Mn\"ev \cite{Mnev:1988} and Richter-Gebert~\cite{Richter-Gebert:1996} it is known that every basic semialgebraic set occurs as the realization space of a polytope in dimension at least four.
This is a precise way of saying: realization spaces of polytopes are complicated.
However, the situation is very different if we consider $3$-dimensional polytopes.
By Steinitz' theorem \cite[Section III]{Steinitz:1922} the combinatorial types of the $3$-polytopes correspond to the planar graphs which are $3$-connected; cf.\ \cite[Chapter~4]{Ziegler:1995}.
This entails that the realization spaces of $3$-polytopes are rather simple.
For instance, they are contractible, and each $3$-polytope admits a realization with rational coordinates.

The nontrivial core of Steinitz' result is the statement: each $3$-connected planar graph admits a realization of the vertex-edge graph of a $3$-polytope.
This lends itself to various strengthenings.
Among these are results of Koebe \cite{Koebe:1936}, Schramm \cite{Schramm:1992} and Springborn~\cite{Springborn:2005}, which can be summarized as follows; see also \cite[Theorem~4.13]{Ziegler:1995}.
\begin{theorem*}
  For every $3$-connected planar graph, there is a representation as the graph of a $3$-polytope whose edges are all tangent to the unit sphere $\Sph^2 \subset \RR^3$, and such that $0$ is the barycenter of the contact points.
  This representation is unique up to rotations and reflections of the polytope in~$\RR^3$.
\end{theorem*}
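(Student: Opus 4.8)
The plan is to translate the problem into the language of circle patterns on the sphere, to invoke the Koebe--Andreev--Thurston circle packing theorem for existence and uniqueness up to Möbius transformations, and finally to use the barycenter condition to rigidify the Möbius freedom down to the group $\Orth(3)$ of rotations and reflections.

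First I would set up the dictionary between midscribed $3$-polytopes and orthogonal circle patterns. Suppose $P$ is a $3$-polytope with every edge tangent to $\Sph^2$. For each $2$-face $f$ the affine hull cuts the sphere in a circle $c_f$, and for each vertex $v$ the cone of lines through $v$ tangent to $\Sph^2$ touches it along a circle $c_v$. Because each edge $e = vv' = f\cap f'$ is tangent to $\Sph^2$ at a single point $t_e$, the circles $c_v,c_{v'}$ are mutually tangent at $t_e$, the circles $c_f,c_{f'}$ are mutually tangent at the same point $t_e$, and $c_v\perp c_f$ whenever $v\in f$. Thus $\{c_v\}$ realizes the graph $G$ and $\{c_f\}$ realizes the planar dual $G^*$ as an orthogonal primal--dual circle pattern, whose contact points are exactly the edge-tangency points $t_e$. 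Polar duality with respect to $\Sph^2$ exchanges the two families while fixing each $t_e$, which explains the symmetry between $G$ and $G^*$. Conversely, reconstructing $P$ from such a pattern is routine: the vertex $v$ is the apex of the tangent cone over $c_v$, and convexity together with $3$-connectedness guarantees that these points assemble into a polytope with graph $G$.

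Next I would establish existence and uniqueness of the pattern. By the circle packing theorem for $3$-connected planar graphs, $G$ is the tangency graph of a circle packing on $\Sph^2$, unique up to Möbius transformations of the sphere. I would obtain this via the variational method: parametrize the circles by their (hyperbolic) radii, write down the associated strictly convex energy functional, and note that its unique critical point satisfies the angle conditions encoding tangency of neighbouring circles and orthogonality to the dual circles. Strict convexity yields both existence and uniqueness of the packing, hence of the primal--dual pattern and of the midscribed polytope it determines, \emph{up to the full Möbius group} $\PSL(2,\CC)$ acting on $\Sph^2$.

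It remains to cut the six-dimensional Möbius group down to the three-dimensional group $\Orth(3)$ of Euclidean symmetries using the barycenter normalization. Modulo rotations, the Möbius transformations are parametrized by hyperbolic $3$-space, identified with the open unit ball $B^3$ bounded by $\Sph^2$; applying the Möbius transformation associated with $w\in B^3$ moves the finite contact set $\{t_e\}\subset\Sph^2$, and hence moves its Euclidean barycenter. I would show that there is exactly one $w$ for which this barycenter is $0$: this is the conformal centering of Springborn. The clean way to see it is to exhibit a strictly convex function on $B^3$ whose gradient vanishes precisely when the barycenter of the transformed contact points is the origin; strict convexity together with a properness estimate (using that the $t_e$ do not concentrate at a single point, which holds since $P$ has many edges) forces a unique minimizer. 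Fixing this $w$ normalizes the pattern up to the residual rotational freedom $\Orth(3)$, which on the polytope side is exactly rotations and reflections of $\RR^3$, giving the asserted uniqueness. The main obstacle is precisely this last step: establishing strict convexity and properness of the centering functional, so that disentangling the genuinely shape-changing Möbius transformations from the isometric ones selects a single orbit under $\Orth(3)$. Existence of the pattern and its Möbius-uniqueness are handled cleanly by the convex variational principle, and the polytope/pattern dictionary is elementary projective geometry; it is the centering that upgrades the classical Koebe--Schramm existence statement to the sharp uniqueness in the theorem.
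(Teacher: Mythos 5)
The paper does not prove this theorem itself: it is quoted as the combined result of Koebe, Schramm and Springborn, with references to those works and to Ziegler's book. So the honest comparison is with the cited literature, and your sketch does follow that standard route: the dictionary between edge-tangent polytopes and orthogonal primal--dual circle patterns, a variational principle for existence and Möbius-uniqueness of the pattern, and Springborn's conformal centering to cut $\PSL_2\CC$ down to $\Orth(3)$. Your last step is exactly the content of the paper's Proposition~\ref{prop:center} (Springborn's Lemma~1), and the strictly convex centering functional you postulate is the one the paper writes explicitly in the proof of Proposition~\ref{prop:kob=sch}, namely $x \mapsto \sum_{e}\log(1-\tau_\koebe(e)\cdot x')-\tfrac{m}{2}\log(1-\norm{x'}^2)$ on the Klein ball; your properness heuristic (the contact points do not concentrate) is the right condition, since each boundary point of $I$ can be the center of at most one of the $m\geq 6$ horospheres.

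Two places where your sketch is thinner than a proof and should not be waved through. First, the existence of the \emph{orthogonal primal--dual} pattern for every $3$-connected planar graph is strictly stronger than the bare circle packing theorem you invoke; it is the Brightwell--Scheinerman/Schramm midscribability theorem, and while it is indeed provable by the variational method you describe (Bobenko--Springborn), the functional, its domain, and the verification that its critical point equations encode both tangency and orthogonality all have to be written down --- this is the analytic core of the existence statement and cannot be cited as ``the circle packing theorem.'' Second, Möbius-uniqueness does not fall out of strict convexity alone: the radius functional is defined only after a normalization (e.g.\ fixing a face or puncturing at a vertex), and one must check that two patterns inducing the same combinatorics differ by an element of $\PSL_2\CC$ by comparing their normalized representatives. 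Neither point is a wrong turn --- both are handled in the sources the paper cites --- but as written your argument invokes rather than establishes the two hard steps, so it is an accurate roadmap of the known proof rather than a self-contained one.
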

Interest in this line of research is motivated not only by polytope theory but also, e.g., by the geometry and topology of $3$-dimensional manifolds; see Thurston~\cite[Section 13.6]{Thurston:gt3m}.
The aforementioned result gives rise to \emph{Koebe realizations} of a $3$-polytope (with edges tangent to the sphere) and \emph{Springborn realizations} (which additionally require that the origin is the barycenter of the contact points).
These notions lead to constrained realization spaces, which again admit descriptions as semialgebraic sets defined by rational polynomials.
Basic model theory implies that there are Koebe and Springborn realizations such that the vertex and facet coordinates are real algebraic numbers; cf.\ \cite{Bochnak:1998}.
The purpose of this article is to study the resulting minimal degrees, which we call \emph{Koebe} and \emph{Springborn degrees}, respectively.

By constraining realization spaces of $3$-polytopes via imposing additional algebraic constraints we arrive at a class of interesting semialgebraic sets.
These are more rich than the full realization spaces of $3$-polytopes but still easier to understand than the infinitely more difficult realization spaces of $4$-polytopes.
One natural question is: Which $3$-polytopes admit a Koebe realization which is rational?
In general, this seems to be a hard problem.
The algorithmic approach to rational realizations of polytopes has been pioneered by Bokowski and Sturmfels in \cite{BokowskiSturmfels:1989}.
By reducing to oriented matroids, they show that deciding whether a polytope admits a realization with rational coordinates is equivalent to deciding whether a diophantine equation has a rational solution.
The latter is the rational formulation of Hilbert's 10th problem, which is still open; cf.\ \cite[Section 2.3]{BokowskiSturmfels:1989}.
Matiyasevich proved in \cite{Matiyasevich:1993} that the question over the integers has a negative answer, and there is no algorithm to decide whether a diophantine equation has an integer solution. 

Our contributions are the following.
Theorem~\ref{thm:triangular} gives an upper bound of the Koebe degree, which is doubly exponential in the number of vertices or facets.
The proof uses cylindrical algebraic decomposition (CAD), which is a method for quantifier elimination over real closed fields developed by Collins~\cite{CAD}; see also \cite[Section 5]{Basu:2006}.
Theorems~\ref{thm:Springbornbipyr} and \ref{thm:Koebebipyr} demonstrate that the Springborn and Koebe degrees are nontrivial invariants of a $3$-polytope: neither of them is bounded by any constant.
In our final result, Theorem~\ref{thm:stacked}, we show that stacked $3$-polytopes always admit a Koebe realization which is rational.
In a way the stacked polytopes may be considered the most simple class of convex polytopes.
So that result supports our intuition that the Koebe and Springborn degrees provide algebraic complexity measures which reflect a certain combinatorial complexity.
We close with a few open questions.

\paragraph{Acknowledgments.}
This work is inspired by discussions with Bernd Sturmfels on realizations of $3$-polytopes related to virus capsids, cf.\ \cite{TwarockLuque}.
We are grateful to Günter M. Ziegler for useful comments.

\section{Preliminaries}
\noindent
We will collect some useful facts about classical groups and how this is connected with polytopes in $\RR^3$.

\subsection{Lorentz transformations}  \label{sect:groups}
Consider the homogeneous quadratic form $q(x_0,x_1,x_2,x_3) = x_0^2 - x_1^2 - x_2^2 - x_3^2$ on $\RR^4$.
The pair $(\RR^4,q)$ is called \emph{Minkowski $4$-space.} 
This is preserved by the orthogonal group $\Orth_{1,3}\RR$ of \emph{Lorentz transformations}.
The set 
\begin{equation}\label{eq:sphere}
  \Sph^2 \ = \ \SetOf{x\in\RR^4}{x_0=1,\, q(x)=0}
\end{equation}
is the \emph{celestial sphere} of special relativity.
The Lorentz transformations which leave the set $I^{+}:=\smallSetOf{x\in\RR^4}{q(x)>0}$ invariant form a subgroup denoted as $\Orth_{1,3}^+\RR$. %; this is also the connected component of the identity.
The subgroup of $\Orth_{1,3}^+\RR$ containing the matrices with positive determinant has index two, and we write it as $\SO_{1,3}^+\RR$. 
We have
\[\Orth_{1,3}^+\RR\ = \ \SO_{1,3}^+\RR \rtimes \langle\diag(1,-1,-1,-1)\rangle \enspace ,\]
where $\rtimes$ is the semidirect product.
Notice that via dehomogenization $I^+$ becomes the $3$-dimensional open unit ball $I$ whose boundary is the sphere $\Sph^2$.
The group $\SO_{1,3}^+\RR$ acts on $\Sph^2$ as a subgroup of projective transformations in~$\RR^3$, and $\diag(1,-1,-1,-1)$ is the antipodal map on $\Sph^2$.
We will now analyze this group action.
%\cite[\S9.II.B]{Dieudonne:1971}

Let $\KK$ be a subfield of $\RR$.
Then its quadratic extension $\LL=\KK[i]=\KK[x]/(x^2+1)$ is a subfield of $\CC=\RR[i]$.
Here we let $i$ denote the imaginary unit, and $z\mapsto\overline{z}$ is the complex conjugation.
We write $\FF\cup\{\infty\}$ for the projective line over any field $\FF$.
The map $\sigma:\CC\cup\{\infty\}\to\Sph^2$ induced by
\begin{equation}\label{eq:stereo}
  (z,w) \mapsto \bigl(z\overline{z}+w\overline{w},\, z\overline{z}-w\overline{w},\, z\overline{w}+w\overline{z},\, -i(z\overline{w}-w\overline{z})\bigr)
\end{equation}
%for $w\neq 0$ and $(z,0)\mapsto\infty$ for $z\neq 0$
is the \emph{stereographic projection}.
The following is classical; see, e.g., \cite[Section 9.II.5]{Dieudonne:1971}.
\begin{lemma}\label{lem:equivalent}
  The group $\SO_{1,3}^+\KK$ is isomorphic to $\PSL_2 \LL$.
  In particular, for $\KK=\RR$ we obtain $\SO_{1,3}^+\RR \cong \PSL_2\CC$.
  Moreover, the natural action of $\SO_{1,3}^+\KK$ on $\Sph^2$ is equivalent to the natural action of $\PSL_2\LL$ on $\CC\cup\{\infty\}$.
\end{lemma}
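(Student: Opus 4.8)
The plan is to use the classical model of Minkowski $4$-space by Hermitian $2\times 2$ matrices. To a point $x=(x_0,x_1,x_2,x_3)$ I associate
\[
  H(x) \ = \ \begin{pmatrix} x_0+x_1 & x_2+ix_3 \\ x_2-ix_3 & x_0-x_1 \end{pmatrix} ,
\]
which is a $\KK$-linear isomorphism from $\KK^4$ onto the space $\mathcal H$ of $2\times 2$ matrices over $\LL$ that are Hermitian with respect to the conjugation of $\LL$ over $\KK$; expanding gives $\det H(x)=q(x)$. Writing $A^{*}=\transpose{\overline A}$ for the conjugate transpose, the rule $A\cdot H = AHA^{*}$ defines a left action of $\SL_2\LL$ on $\mathcal H$: it preserves $\mathcal H$ since $(AHA^{*})^{*}=AHA^{*}$, and it preserves determinants since $\det(AHA^{*})=\det A\,\overline{\det A}\,\det H=\det H$ for $A\in\SL_2\LL$. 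Transporting this action through $H$ yields a group homomorphism $\rho\colon\SL_2\LL\to\Orth_{1,3}\KK$, the orthogonal group of $q$ over $\KK$.

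Next I would pin down the kernel and the sign. Setting $H=\1$ in $AHA^{*}=H$ forces $A$ to be unitary, whence $AHA^{*}=H$ becomes $AH=HA$ for all $H\in\mathcal H$; since $\mathcal H$ spans the full matrix algebra, $A$ is scalar, and $\det A=1$ leaves only $A=\pm\1$. Thus $\rho$ descends to an injection $\PSL_2\LL\hookrightarrow\Orth_{1,3}\KK$. To see that the image lies in $\SO_{1,3}^{+}\KK$, observe that $H(x)$ is positive definite precisely when $q(x)>0$ and $x_0>0$, that is, exactly on the forward cone inside $I^{+}$; as the congruence $H\mapsto AHA^{*}$ preserves positive definiteness, $\rho(A)$ preserves $I^{+}$. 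Moreover $\det\rho(A)\in\{\pm1\}$ is a continuous function on the connected group $\SL_2\CC\supseteq\SL_2\LL$ taking value $1$ at the identity, hence identically $1$. So $\rho$ maps into $\SO_{1,3}^{+}\KK$.

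I expect surjectivity onto $\SO_{1,3}^{+}\KK$ to be the main obstacle. For $\KK=\RR$ it is immediate: $\SL_2\CC$ is connected, so the image sits in the identity component $\SO_{1,3}^{+}\RR$, and since both are smooth of real dimension $6$ and $\rho$ has discrete kernel, the image is open and closed, hence all of $\SO_{1,3}^{+}\RR$. For a general subfield this topological argument is unavailable and one must argue by generators. I would use that $\SL_2\LL$ is generated by the elementary matrices $\begin{pmatrix}1&t\\0&1\end{pmatrix}$ and $\begin{pmatrix}1&0\\t&1\end{pmatrix}$ with $t\in\LL$ together with the diagonal matrices $\diag(a,a^{-1})$, $a\in\LL^{\times}$, compute their images as parabolic null rotations, boosts in the $(x_0,x_1)$-plane, and rotations in the $(x_2,x_3)$-plane (the latter through multiplication by the norm-one element $a/\overline a$), and then verify that these generate all of $\SO_{1,3}^{+}\KK$. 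This last generation statement is precisely the classical relationship between the spin group and the reduced orthogonal group, and it is the step requiring care over an arbitrary subfield; it is the reason the result is attributed to the standard references.

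Finally, the equivalence of the two actions falls out of the same model. Representing the point $(z:w)$ of $\LL\cup\{\infty\}$ by the column $v=\transpose{(z,w)}$, one computes directly from \eqref{eq:stereo} that $H(\sigma(z,w))=2\,vv^{*}$, so $\sigma$ identifies $\LL\cup\{\infty\}$ with the rays of rank-one positive semidefinite matrices in $\mathcal H$, that is, with the $\KK$-points of $\Sph^2$. Since $A\cdot(vv^{*})=(Av)(Av)^{*}$, the homomorphism $\rho$ intertwines the action $v\mapsto Av$ of $\PSL_2\LL$ on $\LL\cup\{\infty\}$ with the action of $\SO_{1,3}^{+}\KK$ on $\Sph^2$, which is the asserted equivalence.
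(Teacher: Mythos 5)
Your construction is the same Hermitian-matrix model that the paper uses, and most of it is carried out in more detail than in the paper: the identity $\det H(x)=q(x)$, the kernel computation via the fact that the Hermitian matrices span $M_2(\LL)$ over $\LL$, the positive-definiteness argument placing the image in the orthochronous group, the determinant argument by continuity on the connected group $\SL_2\CC$, and the intertwining identity $H(\sigma(z,w))=2vv^{*}$ (which is the paper's computation $h(AXA^{*})=\sigma\bigl(A\cdot\transpose{(z,w)}\bigr)$ in a slightly different guise) are all correct.

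The genuine gap is exactly the one you flag: surjectivity of $\rho\colon\PSL_2\LL\to\SO_{1,3}^{+}\KK$ for a general subfield $\KK\subset\RR$. Your connectedness argument does settle the case $\KK=\RR$, which is the case the paper relies on later, but the generators strategy you propose for arbitrary $\KK$ cannot be completed, because the generation statement it rests on is false: the image of $\SL_2\LL$ is the kernel of the spinor norm $\SO_{1,3}^{+}\KK\to\KK^{\times}/(\KK^{\times})^{2}$, which is in general a proper subgroup. Concretely, for $\KK=\QQ$ the rotation fixing $x_0,x_1$ and sending $(x_2,x_3)$ to $\tfrac{1}{5}(3x_2-4x_3,\,4x_2+3x_3)$ lies in $\SO_{1,3}^{+}\QQ$; it fixes the two points $0$ and $\infty$ of $\LL\cup\{\infty\}$, so any preimage in $\SL_2\LL$ would have to be $\pm\diag(\alpha,\alpha^{-1})$ with $\alpha\bar{\alpha}=1$ and $\alpha^{2}=(3+4i)/5$, and no such $\alpha$ exists in $\QQ[i]$ since $\sqrt{5}\notin\QQ[i]$ (equivalently, the spinor norm of this rotation is $5$, a nonsquare). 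So this element is not in the image. For fairness: the paper's own proof does not close this gap either --- it simply declares the map to be an epimorphism --- so your isolating of the problematic step is the more honest account; but as written, neither argument establishes the lemma beyond fields, such as $\RR$, over which the spinor norm is trivial on $\SO_{1,3}^{+}$.
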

We give a short proof for the sake of completeness.
\begin{proof}
  From $x=(x_0,x_1,x_2,x_3)\in\KK^4$ we define the Hermitian matrix
  \begin{equation}\label{eq:X}
    X \ = \ \begin{pmatrix} x_0+x_1 & x_2+ix_3 \\ x_2-ix_3 & x_0-x_1 \end{pmatrix} \enspace ,
  \end{equation}
  with entries in $\LL=\KK[i]$.
  We have $\det X = x_0^2 - x_1^2 -x_2^2 - x_3^2 = q(x)$.
  For $A\in\SL_2\LL$, with  $A^*:=\transpose{\bar{A}}$ its conjugate transpose, we have $\det(AXA^*) = \det X = q(x)$ as $\det A=\det A^*=1$.
  Moreover, $(AXA^*)^*= A^{**} X^* A^*=AXA^*$, i.e., $AXA^*$ is Hermitian.
  Since every Hermitian $2{\times}2$-matrix can be written in the form \eqref{eq:X}, the map
  \begin{equation}\label{eq:action}
    X\mapsto AXA^*
  \end{equation}
  defines a linear action on $\KK^4$.
  As we saw this preserves the quadratic form~$q$, whence we obtain an element of $\SO_{1,3}^+\KK$.
  The subgroup $\langle\pm\1\rangle$ forms the kernel of the resulting epimorphism from $\SL_2\LL$ onto $\SO_{1,3}^+\KK$.
  The quotient $\SL_2\LL/\langle-\1\rangle$ is the projective special linear group $\PSL_2\LL$.

  It remains to compare the actions.
  Pick $(z,w)\in\CC^2\setminus\{(0,0)\}$, and let $(x_0,x_1,x_2,x_3):=\sigma(z,w)$ via \eqref{eq:stereo}; this defines a Hermitian matrix $X$ as in~\eqref{eq:X}.
  We abbreviate the inverse map from Hermitian matrices to $\CC^4$ by $h$, i.e., $h(X)=(x_0,x_1,x_2,x_3)$.
  For $A\in\SL_2\LL$ we compute
  \[
    h(AXA^*) \ = \ \sigma\bigl(\, A\cdot\colvectwo{z}{w} \,\bigr) \enspace ,
  \]
  and this proves the claim.
\end{proof}

We write $\Sph^2_\KK$ for the intersection of $\Sph^2$ with $\KK^3$.
The stereographic projection induces a bijection onto the projective line $\LL\cup\{\infty\}$.
The group actions can be restricted.
In this way the action of $\SO_{1,3}^+\KK$ on $\Sph^2_\KK$ is equivalent to the action of $\PSL_2\LL$ on $\LL\cup\{\infty\}$.
The group $\PSL_2\LL$ is a normal subgroup of the group $\PGL_2\LL=\GL_2\LL/\LL^*\1$, and the quotient is given by the quadratic residues.
In particular, $\PSL_2\LL=\PGL_2\LL$ if and only if every element in $\LL$ is a square.
The action of $\PGL_2\LL$ on the projective line over $\LL$ is sharply triply transitive.
The elements of $\PGL_2\CC$ are called \emph{Möbius transformations}.

\subsection{Koebe realizations and hyperbolic geometry}
Let $P$ be a convex polytope of dimension $3$. 
Its vertex-edge graph will be denoted $\Gamma=(V,E)$. 
Let $\koebe\subset\RR^3$ be another polytope, combinatorially equivalent to $P$, such that the edges of $\koebe$ are tangent to the unit sphere $\Sph^2\subset\RR^3$.
Identifying $\RR^3$ with the affine subspace $x_0=1$ of $\RR^4$ allows to view $\Sph^2$ as the celestial sphere~\eqref{eq:sphere}.
We call the polytope $\koebe$ a \emph{Koebe realization} of $P$, and the \emph{contact points} are the points of tangency.
For an edge $e\in E$ we indicate the corresponding contact point with  $\tau_\koebe(e)$ and we set
\begin{equation}
  \beta_\koebe \ := \ \frac{1}{m} \sum_{e\in E} \tau_\koebe(e) \enspace ,
\end{equation}
where $m=\# E$ is the number of edges of $\koebe$.
The point $\beta_\koebe$ is the \emph{edge barycenter} of $\koebe$.

An admissible transformation in $\Orth_{1,3}^+\RR$ maps one Koebe realization of $P$ to some other Koebe realization.
In fact, all Koebe realizations can be obtained from one of them by applying an admissible transformation; see~\cite{Springborn:2005}.
Here a transformation is \emph{admissible} to $\koebe$ if no point of $\koebe$ is mapped to a point $(x_0,x_1,x_2,x_3)\in\RR^4$ with $x_0=0$.

Let us  recall some definitions and results from \cite{Springborn:2005}.
Fix a polytope $P$ with $m$ edges and a Koebe realization $\koebe$ of such a polytope.
We consider the open unit ball $I$ as the Klein model for the hyperbolic $3$-space, and $\Sph^2$ is its infinite boundary. 
%Given a point $O$ on $\Sph^2$ and a point $M$ in $B_K$, a \emph{horosphere} is the set of images of $M$ under hyperbolic reflections with respect to all planes containing $O$.
A \emph{horosphere} in $I$ is a hyperbolic sphere whose center lies in the boundary $\partial I=\Sph^2$.
For a fixed horosphere $h$ in $I$ we define the function which sends a point $x\in I$ to
\[
  \delta_h(x) \ = \ 
  \begin{cases}
    -\textnormal{dist}(x,h)\textnormal{ if $x$ is inside $h$}\\
    0\textnormal{ if }x\in h\\
    \textnormal{dist}(x,h)\textnormal{ if $x$ is outside $h$}
  \end{cases}
\]
The \emph{point of minimal distance sum} of the Koebe realization $\koebe$ is the unique minimum in $I$  of the function
\begin{equation}
\label{fun:minspri}
x \mapsto \sum_{i=1}^m \delta_{h_i}(x) \enspace ,
\end{equation}
where $h_i$ is a fixed horosphere centered in the $i$th contact point of $\koebe$.
Bobenko and Springborn investigated the existence and uniqueness of Koebe realizations via variational principles~\cite{BobenkoSpringborn:2004}.
Using these techniques Springborn showed \cite{Springborn:2005} that the point of minimal distance sum of $\koebe$ is zero if and only if the edge barycenter $\beta_\koebe$ is zero.
This answered a question posed by Günter M. Ziegler.
The following is the crucial step.

\begin{proposition}\cite[Lemma 1]{Springborn:2005} \label{prop:center}
  There is a transformation $T \in\Orth_{1,3}^+\RR$, admissible with respect to $\koebe$, such that $\beta_{T\koebe}=0$.
  Moreover, if $\tilde{T}$ is another such transformation, then $\tilde{T}=RT$ where $R$ is a rotation in $\RR^3$.
\end{proposition}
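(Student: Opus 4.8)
The plan is to recast the statement as a convex variational problem on the hyperbolic $3$-space $I$ and to read off both existence and uniqueness from the minimizer of the distance-sum function \eqref{fun:minspri}. Write $F_\koebe(x)=\sum_{i=1}^m\delta_{h_i}(x)$, where $h_i$ is the horosphere centered at the $i$th contact point $c_i\in\Sph^2$, and identify the isometry group of the Klein model $I$ with $\Orth_{1,3}^+\RR$. Two structural facts are used repeatedly. First, $F_\koebe$ is convex and coercive: towards a point of $\partial I$ at most one of the $m$ signed distances tends to $-\infty$ while the remaining ones tend to $+\infty$ at the same logarithmic rate, and since a $3$-polytope has $m\ge 6$ edges the sum tends to $+\infty$; hence $F_\koebe$ has a unique minimizer $p\in I$, which is the point of minimal distance sum of \cite{BobenkoSpringborn:2004,Springborn:2005}. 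Second, $F$ is equivariant: for $T\in\Orth_{1,3}^+\RR$ one has $F_{T\koebe}(Tx)=F_\koebe(x)$, because $T$ is a hyperbolic isometry carrying the contact points, and hence the horospheres, to those of $T\koebe$.

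The link to the barycenter is a gradient computation at the center. At $x=0$ the hyperbolic and Euclidean metrics agree up to scale, and $\nabla\delta_{h_i}(0)$ points away from $c_i$, i.e.\ along $-c_i$; summing yields $\nabla F_\koebe(0)$ proportional to $-\sum_i c_i=-m\,\beta_\koebe$. By convexity $0$ minimizes $F_\koebe$ if and only if $\beta_\koebe=0$, which is exactly Springborn's characterization. For existence I then take the minimizer $p$ of $F_\koebe$ and, using transitivity of $\Orth_{1,3}^+\RR$ on $I$, a transformation $T$ with $T(p)=0$. By equivariance $0$ minimizes $F_{T\koebe}$, so $\nabla F_{T\koebe}(0)=0$ and therefore $\beta_{T\koebe}=0$; this formally produces the desired transformation.

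The hard part is to prove that $T$ is \emph{admissible}, so that $T\koebe$ is a genuine bounded polytope in $\RR^3$ rather than a configuration wrapping around infinity. Since $T$ preserves the quadratic form $q$, the preimage $T^{-1}\{x_0=0\}$ is the polar hyperplane of $p$ with respect to $\Sph^2$, so admissibility is equivalent to this polar hyperplane being disjoint from $\koebe$; geometrically, a vertex of $T\koebe$ reaches infinity exactly when the circle carrying the contact points incident to it degenerates to a great circle of $\Sph^2$. The delicate point is that the distance-sum function is \emph{insensitive} to this event---the contact points stay on $\Sph^2$ and $F_{T\koebe}(0)$ remains finite---so admissibility cannot be extracted from coercivity of $F$ alone and must be read off from the polytopal geometry. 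I would argue along the centering geodesic $t\mapsto p_t$ from $0$ to $p$, with $T_t(p_t)=0$ and $T_0=\mathrm{id}$: the admissible parameters form an open, star-shaped neighborhood of the origin, and the task is to show that $p$ does not lie on its boundary, i.e.\ that no contact cycle degenerates to a great circle before the center is reached. Establishing this incompatibility between such a degeneration and $p$ being the minimizer is the technical core; it is precisely the content of \cite[Lemma 1]{Springborn:2005}, and I expect it to be the main obstacle.

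Uniqueness, by contrast, is a direct consequence of uniqueness of the minimizer. If $\tilde T$ is another admissible transformation with $\beta_{\tilde T\koebe}=0$, then by the characterization $0$ minimizes $F_{\tilde T\koebe}$; by equivariance $\tilde T(p)$ also minimizes it, so $\tilde T(p)=0=T(p)$ and $\tilde T T^{-1}$ fixes the origin. The stabilizer of the origin in $\Orth_{1,3}^+\RR$ consists of the transformations fixing $(1,0,0,0)\in\RR^4$ and acting orthogonally on the spatial $\RR^3$, that is, the rotations and reflections of $\RR^3$. Setting $R=\tilde T T^{-1}$ gives $\tilde T=RT$, as claimed.
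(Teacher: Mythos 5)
First, be aware that the paper does not actually prove Proposition~\ref{prop:center}: the statement is imported from \cite[Lemma~1]{Springborn:2005}, and the only argument the paper itself supplies is the short remark immediately following the proposition. Your variational scheme is exactly Springborn's: convexity and coercivity of the distance-sum function \eqref{fun:minspri}, the gradient identity $\nabla F_{\koebe}(0)=-m\beta_{\koebe}$ (up to positive scale, consistent with the explicit formula used later in the proof of Proposition~\ref{prop:kob=sch}), transitivity of $\Orth_{1,3}^+\RR$ on $I$ for existence, and the stabilizer computation for uniqueness. All of these steps are sound. One small mismatch: the stabilizer of the origin in $\Orth_{1,3}^+\RR$ is $\Orth_3\RR$, so your $R$ may be a reflection rather than a rotation; this agrees with the \enquote{rotations and reflections} phrasing of the theorem in the introduction, but not with the literal wording of the proposition.

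The genuine gap is admissibility of $T$, which you explicitly leave open, call \enquote{the technical core}, and propose to handle by a degeneration analysis along the centering geodesic while ultimately deferring to the very lemma being proved --- which makes the write-up circular as it stands. The gap is, however, much easier to close than you anticipate, and the paper does so in the sentence right after the proposition. Your derivation of $\beta_{T\koebe}=0$ does not presuppose admissibility: the images of the contact points under $T$ are always well-defined points of $\Sph^2$ (a nonzero point of the light cone never lands on $x_0=0$), the function $F_{T\koebe}$ depends only on these images, and your equivariance-plus-gradient argument applies verbatim. Now if $T$ were not admissible, some point of $\koebe$ would be sent to the far hyperplane; since $T\koebe$ contains the closed unit ball together with a point at infinity, convexity forces every image contact point into a closed hemisphere (and not all of them onto its bounding great circle), which contradicts $\beta_{T\koebe}=0$. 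So admissibility is a one-line corollary of the barycenter condition you have already established, and no analysis of how contact cycles degenerate along the geodesic is needed.
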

Notice that a transformation $T$ in $\Orth_{1,3}^+\RR$ with $\beta_{T\koebe}=0$ is always admissible:
Otherwise, a point of $\koebe$ would be mapped to the far hyperplane, and the images of the contact points would be contained in a hemisphere; the latter contradicts $\beta_{T\koebe}=0$.
In this way we obtain an orientation-preserving admissible transformation which produces a Koebe realization of $P$ whose point of minimal distance sum is the origin.
We call such a Koebe realization of $P$ a \emph{Springborn realization}; this is also a Koebe realization whose edge barycenter is the origin.
Thanks to Proposition \ref{prop:center}, any two Springborn realizations of $P$ differ by rotations and reflections.

\section{Constrained realizations and their degrees} 
\noindent
We now present one model for the realization space of a given ($3$-dimensional) polytope $P$, and explain how the Koebe realizations fit in.
Let $n=\# V$ be the number of vertices of $P$.
Fixing a labeling of the vertices, we can see the set of all realizations of $P$ as a semialgebraic set in $\RR^{3\times n}$; this is what we call the \textit{realization space} of $P$.

%If $I_P\in \{0,1\}^{f\times n}$ is the facet-vertex incidence matrix of $P$, this set is \[\{(v_1,\dots,v_n,a_1,\dots,a_f)\in\RR^{3\times(n+f)}\mid v_i^Ta_j=\begin{cases} 1 & \mbox{if } I_P(j,i)=1 \\ <1 & \mbox{if } I_P(j,i)=0 \end{cases}\,\,\}\] and it is a basic semialgebraic set.
 %which, in a slightly different model of realization space, is a smooth manifold (see \cite{Schramm:1992}). 

\begin{remark} \label{rem:realspace}
  The defining equations and strict inequalities can be obtained from the combinatorics of $P$ as follows.
  Let  $(v_1,\dots,v_n)$ be the columns of $3{\times}n$-matrix of indeterminates. % and let us choose an orientation for $\RR^3$.
  We identify the vertices of $P$ with their labels $1,2,\dots,n$.
  For each facet $F$ of $P$, we pick three affinely independent vertices $i,j,k$ which lie in $F$.
  Then each other vertex, $\ell$, gives rise to one of two conditions.
  Either $\ell$ is contained in $F$, too, then we have
  \[
    \det\begin{pmatrix} 1 & 1 & 1 & 1 \\ v_{i} & v_{j} & v_{k} & v_\ell \end{pmatrix} \ = \ 0 \enspace ;
  \]
  or $\ell$ is not contained in $F$, and we have
  \[
    \epsilon \cdot \det\begin{pmatrix} 1 & 1& 1 & 1 \\ v_{i} & v_{j} & v_{k} & v_\ell \end{pmatrix} \ > \ 0 \enspace .
  \]
  The sign $\epsilon\in\{\pm 1\}$ depends on the orientation of $F$ induced by $i,j,k$, i.e., if the cross product of $v_j-v_i$ and $v_k-v_i$ determines the outer normal direction for the face $F$, then the sign is negative, otherwise it is positive.
  % In this For every label $\ell_{\text{out}} \not \in F$ such that in every realization the vectors $v_{i}-v_{\ell_{\text{out}}}, v_{j}-v_{\ell_{\text{out}}},v_{k}-v_{l_{\text{out}}}$ form a positive oriented basis for $\RR^3$ we have 
  % Otherwise we take the opposite inequality.
\end{remark}

Within this realization space the set of Koebe realizations is constrained by the edge tangency conditions, which read
\begin{equation}\label{eq:koebe-constraint}
  (v_i \cdot (v_j-v_i))^2-\norm{v_i-v_j}^2(\norm{v_i}^2-1) \ = \ 0
\end{equation}
for any pair of adjacent vertices $i$ and $j$.
This also turns the \emph{Koebe realization space} of $P$ into a basic semialgebraic set.

\begin{remark}
  The realization space described in Remark \ref{rem:realspace} encodes a polytope via the coordinates of its vertices.
  Dually, it could also be written in terms of the facets.
  Various other models of realization spaces have been proposed in the literature:
  The space considered by Richter-Gebert \cite{Richter-Gebert:1996} factors out affine transformations.
  Rastanawi, Sinn and Ziegler analyze the \enquote{centered realization space} where a polytope is given by vertices \emph{and} facet normals \cite{Centered:2020}.
  Gouveia, Macchia, Thomas and Wiebe introduce a realization space based on slack matrices \cite{Slack:2019}.
  We refer to \cite[Section 6]{Centered:2020} for a comparison.
\end{remark}

\begin{lemma}\label{lem:welldef}
  Within the Koebe realization space of $P$ those realizations whose coordinates are all algebraic form a dense subset.
\end{lemma}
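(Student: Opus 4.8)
The plan is to deduce the statement from a general density principle for semialgebraic sets, rather than from any feature specific to Koebe realizations. Write $S\subseteq\RR^{3\times n}$ for the Koebe realization space of $P$ and set $\RR_{\mathrm{alg}}=\RR\cap\overline{\QQ}$ for the field of real algebraic numbers; the claim is exactly that $S\cap\RR_{\mathrm{alg}}^{3n}$ is dense in $S$. First I would record that $S$ is a \emph{nonempty} semialgebraic set defined over $\QQ$: the incidence equations and strict inequalities of Remark~\ref{rem:realspace} have integer coefficients, the orientation signs lie in $\{\pm1\}$, and the tangency constraints~\eqref{eq:koebe-constraint} again have integer coefficients; nonemptiness holds because the graph of $P$ is planar and $3$-connected, so the representation theorem quoted in the introduction produces at least one Koebe realization. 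Thus it suffices to prove the following self-contained fact: if $\emptyset\neq S\subseteq\RR^N$ is semialgebraic and defined over $\RR_{\mathrm{alg}}$, then $S\cap\RR_{\mathrm{alg}}^{N}$ is dense in $S$.

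For this general fact I would invoke cylindrical algebraic decomposition. Choosing a CAD of $\RR^N$ adapted to $S$, and using that the defining polynomials have coefficients in $\RR_{\mathrm{alg}}$, one obtains a partition of $S$ into finitely many cells, each described level by level either by fixing a coordinate to a value in $\RR_{\mathrm{alg}}$, or by confining it to an open band whose two bounding section functions are continuous algebraic functions with coefficients in $\RR_{\mathrm{alg}}$. Fix $p\in S$ and let $C\subseteq S$ be the cell containing it. I would then approximate $p$ by an $\RR_{\mathrm{alg}}$-point of $C$ one coordinate at a time: at a fixed-coordinate level there is nothing to do, while at a band level the two endpoints are roots of polynomials whose coefficients are the already-chosen algebraic values, hence are themselves algebraic, so one may pick an algebraic value close to the corresponding coordinate of $p$ and still inside the band. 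The output is a point $q\in C\cap\RR_{\mathrm{alg}}^{N}$ arbitrarily close to $p$, which gives density.

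The step that needs care — and which I regard as the main obstacle — is the simultaneous requirement that the perturbed point keep \emph{all} coordinates algebraic \emph{and} remain inside the cell $C$. The resolution rests on two closure properties of $\RR_{\mathrm{alg}}$: it is a real closed field, so roots of polynomials with algebraic coefficients are again algebraic, whence the section functions return algebraic values at algebraic arguments; and it is order-dense in $\RR$. Continuity of the section functions then guarantees that a sufficiently small change in the lower coordinates moves the bands only slightly, so the choices at successive levels can all be kept close to the coordinates of $p$, and the induction closes.

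As a more geometric alternative, one could instead fix a single algebraic Koebe realization $\koebe_0$, which exists by the model-theoretic remark in the introduction, and use that every Koebe realization is $T\koebe_0$ for an admissible $T\in\Orth_{1,3}^+\RR$, by Proposition~\ref{prop:center} and the discussion preceding it. Since the admissible transformations form an open set, and since the algebraic points of $\Orth_{1,3}^+\RR$ are dense — the identity component is $\PSL_2\CC$ by Lemma~\ref{lem:equivalent}, in which $\PSL_2\overline{\QQ}$ is dense — approximating $T$ by an algebraic admissible $T'$ and applying it to $\koebe_0$ yields algebraic Koebe realizations $T'\koebe_0$ converging to $T\koebe_0$; here one uses that the induced projective action on the vertex coordinates is a rational map with algebraic coefficients and, by admissibility, nonvanishing denominators, so it preserves algebraicity.
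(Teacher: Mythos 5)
Your proposal is correct, but it takes a more roundabout route than the paper. The paper's proof is a direct application of the Tarski--Seidenberg transfer principle: given a real Koebe realization $\koebe$ and a rational point $\koebe'$ with $\norm{\koebe-\koebe'}^2<\delta$, the existential first-order sentence asserting a solution of the rational defining system within distance $\delta$ of $\koebe'$ holds in $\RR$, hence also in the real closed field $\RR\cap\overline\QQ$ by \cite[Proposition 5.2.3]{Bochnak:1998}; that is essentially the whole argument. Your main argument establishes the same general density fact, but by unpacking quantifier elimination into its effective form: you build a CAD defined over $\RR\cap\overline\QQ$ and perturb a given point cell by cell, using that $\RR\cap\overline\QQ$ is real closed (so section functions take algebraic values at algebraic arguments) and dense in $\RR$. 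This is a valid proof --- the induction over levels closes exactly as you describe --- and it has the side benefits of treating equations and strict inequalities uniformly (via sign-invariance on cells) and of anticipating the technique the paper actually needs later, in Lemma~\ref{lem:bound}, where CAD yields the doubly exponential degree bound. What it costs is length and one imprecision: your initial description of section cells as \emph{fixing a coordinate to a value in} $\RR_{\mathrm{alg}}$ is inaccurate, since on a section the coordinate equals a root function of the lower coordinates rather than a constant; your third paragraph implicitly repairs this, but the description of the cell structure should be corrected to match. Your alternative geometric argument is genuinely different from the paper: it works, but it is not self-contained, since it needs (i) the existence of one algebraic Koebe realization, which is itself an instance of the very transfer principle you are circumventing, (ii) Springborn's transitivity statement for admissible transformations, and (iii) density of $\Orth_{1,3}^+(\RR\cap\overline\QQ)$ in $\Orth_{1,3}^+\RR$ (a fact the paper also invokes, in Lemma~\ref{lem:interior}); so it trades a one-line model-theoretic argument for three nontrivial inputs.
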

\begin{proof}
  This holds more generally.
  In fact, let $S(x_1,\dots,x_k)$ be a system of polynomials with rational coefficients and $\koebe=(q_1,\dots,q_k)$ any real solution. 
  Pick $Q'\in \QQ^k$ such that $\norm{\koebe-\koebe'}^2<\delta$ with $\delta$ rational and arbitrarily small. 
  The sentence
  \[ \exists X=(x_1,\dots,x_k) \,:\, S(X)=0  \land \norm{X-\koebe'}^2<\delta \]
  is true in $\RR$, and therefore it is also true in $\RR\cap\overline\QQ$, the field of real algebraic numbers.
  This is a consequence of the Tarski--Seidenberg Principle; see \cite[Proposition 5.2.3]{Bochnak:1998}.
\end{proof} 

This observation gives rise to our key definition.
Given any realization $R$ of $P$, we denote with $\QQ[R]$ the field extension of $\QQ$ given by the vertex coordinates of $R$.
The \emph{Koebe degree} $\kappa(P)$ is the minimal algebraic degree $\lvert\QQ[\koebe]:\QQ\rvert$ where $\koebe$ varies over the Koebe realizations of $P$.
Due to Lemma~\ref{lem:welldef} the Koebe degree is always finite.
The Koebe degree of $P$ equals one if and only if $P$ admits a Koebe realization with rational coordinates.
Recall that, without the extra edge tangency condition \eqref{eq:koebe-constraint}, $P$ always admits a rational realization as a consequence
of Steinitz’s theorem \cite{Ziegler:1995}.

\begin{example}\label{ex:simplex}
\begin{figure}
\centering
\includegraphics[scale=0.2]{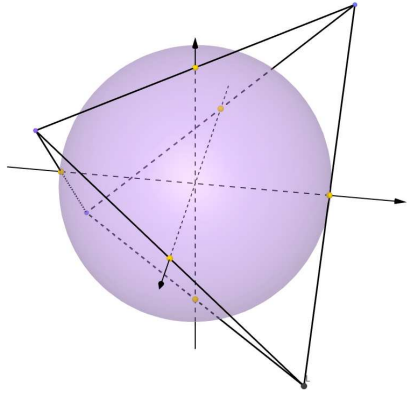}
\caption{A Springborn realization of the tetrahedron with contact points marked.}
\label{fig:simplex}
\end{figure}
  The four columns of the rational matrix
  \[
    \begin{pmatrix}
      -1 &  1 &  1 & -1 \\
       1 & -1 &  1 & -1\\
       1 &  1 & -1 & -1 \\
    \end{pmatrix}
  \]
  provide a Koebe realization of the tetrahedron. 
  Therefore its Koebe degree is one.
  The six contact points form the vertices of a regular octahedron, which sum to zero.
  Hence this is even a Springborn realization; see Figure~\ref{fig:simplex}.
\end{example}

%   We think of $Q$ as a polytope in the affine chart $U_0 = \{(x_0 : x_1: x_2: x_3) \in \RP^3\, :\, x_0\neq 0\}$.
%   Consider the following continuous map 
%   \[\PO_{1,3}\RR\times\RP^3\xrightarrow\psi \RP^3\]
%   \[(M,x)\to Mx\]
%   where $\PO_{1,3}\RR$ is seen as a subset of the space of matrices $\RP^{15}$. We define $\psi_x:=\psi(\, \cdot\, ,x)$ and in particular
%   \[U:=\bigcap_{i=1}^m {\psi_{v_i}^{-1}(U_0)}\cap\PO_{1,3}\RR\]
%   where ${v_1,\dots,v_m}$ are the vertices of $Q$. 
%   This is an open subset of $\PO_{1,3}\RR$ and it is non-empty because it contains rotations and reflections. 
%   We now take $e_0=(1 : 0: 0: 0)$ and we consider  $\widetilde{U}:=(\psi_{e_0}|_U)^{-1}(\interior(Q))$ .
%   Then $\tilde{U}$ is a non-empty open subset of $U$ because lemma \ref{lem:spr} says there exists a transformation in $U$ which brings the point of minimal distance sum, which is a point in the interior of the polytope, to the origin (notice that transitivity of $\PO_{1,3}\RR$ on the Lorentz spheres is not sufficient because we need to avoid the infinity line).
%   Because $\PO_{1,3}\KK$ is dense in $\PO_{1,3}\RR$, we can pick a point $O\in\PO_{1,3}\KK\cap \widetilde{U}$. %This is true because we have that the Lorentz group is generated by boosts and rotations.
%   The projective tranformation we were looking for is given by $O^{-1}$. 

\begin{lemma}\label{lem:interior}
  Let $\koebe$ be a Koebe realization of $P$ over the field $\KK=\QQ[\koebe]$.
  Then there is another Koebe realization $\koebe'$ of $P$, over the same field~$\KK$, such that the origin lies in the interior of $\koebe'$.
\end{lemma}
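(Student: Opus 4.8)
The plan is to produce $\koebe'$ by applying a suitable admissible Lorentz transformation defined over $\KK$ to $\koebe$, and to locate such a transformation by an openness-plus-density argument. Recall that every admissible $T\in\Orth_{1,3}^+\RR$ carries the Koebe realization $\koebe$ to a combinatorially equivalent polytope $T\koebe$ whose edges are again tangent to $\Sph^2$, since $T$ preserves the quadric $q=0$. Crucially, if $T$ has entries in $\KK$ then the vertices of $T\koebe$ again have coordinates in $\KK$: one homogenizes the vertices to vectors in $\KK^4$, applies the $\KK$-matrix $T$, and dehomogenizes by dividing through the first coordinate, which is nonzero by admissibility and lies in $\KK$. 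Thus it suffices to find a single $T\in\Orth_{1,3}^+\KK$ that is admissible for $\koebe$ and satisfies $0\in\interior(T\koebe)$.

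Consider the set
\[
  U \;=\; \bigSetOf{T\in\Orth_{1,3}^+\RR}{T \text{ admissible for } \koebe,\ 0\in\interior(T\koebe)} \enspace .
\]
First I would check that $U$ is open. The vertex coordinates of $T\koebe$ depend rationally, hence continuously, on the entries of $T$ on the admissible locus; admissibility is itself an open condition; and \enquote{the origin satisfies every facet inequality strictly} is preserved under small perturbations that do not change the combinatorial type. So membership in $U$ is stable under small changes of $T$.

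Next I would argue $U\neq\emptyset$ using Proposition~\ref{prop:center}: there is an admissible $T_0\in\Orth_{1,3}^+\RR$ with edge barycenter $\beta_{T_0\koebe}=0$. For this transformation the contact points $p_1,\dots,p_m\in\Sph^2$ of $T_0\koebe$ average to $0$, so they cannot lie in any closed hemisphere. Indeed, if $\langle p_i,u\rangle\ge 0$ held for all $i$ and some unit vector $u$, then $\tfrac1m\sum_i\langle p_i,u\rangle=\langle\beta_{T_0\koebe},u\rangle=0$ would force every $\langle p_i,u\rangle=0$, that is, all contact points would lie on the plane $\langle x,u\rangle=0$ through the origin; this cannot happen for a genuine $3$-polytope, whose edge tangency points are not coplanar. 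Hence the origin lies in the interior of $\conv\{p_1,\dots,p_m\}$, which is contained in $T_0\koebe$, so $T_0\in U$.

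Finally I would invoke density. By Lemma~\ref{lem:equivalent} the action of $\Orth_{1,3}^+\KK$ on $\Sph^2$ matches that of the corresponding matrix group over $\LL=\KK[i]$, and since $\KK$ is dense in $\RR$ the field $\LL$ is dense in $\CC$; approximating three suitable entries of a matrix in $\SL_2\CC$ by elements of $\LL$ and solving the determinant relation for the remaining one shows that $\PSL_2\LL$ is dense in $\PSL_2\CC$, whence $\SO_{1,3}^+\KK$ is dense in $\SO_{1,3}^+\RR$, and together with the reflection $\diag(1,-1,-1,-1)\in\Orth_{1,3}^+\KK$ also $\Orth_{1,3}^+\KK$ is dense in $\Orth_{1,3}^+\RR$. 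Since $U$ is open and nonempty, it contains some $T\in\Orth_{1,3}^+\KK$; then $\koebe':=T\koebe$ is a Koebe realization of $P$ over $\KK$ with $0\in\interior(\koebe')$, as required. I expect the main obstacle to be the density step: one must carry out the approximation within the $\KK$-rational points of the Lorentz group compatibly with the action on $\Sph^2$, which is exactly what the isomorphism of Lemma~\ref{lem:equivalent} provides, while the remaining verifications (openness and the hemisphere argument) are routine.
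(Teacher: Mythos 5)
Your proof is correct and follows essentially the same route as the paper's: apply Proposition~\ref{prop:center} to obtain a real admissible transformation placing the origin in the interior, then combine the density of $\Orth_{1,3}^+\KK$ in $\Orth_{1,3}^+\RR$ with the openness of the condition to find a $\KK$-rational transformation. The paper phrases the approximation step as making the edge barycenter of the $\KK$-realization arbitrarily close to the origin, whereas you make the openness and the hemisphere argument explicit, but the substance is identical.
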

\begin{proof}
  Let $\koebe$ be a Koebe realization of $P$, with $\KK=\QQ[\koebe]$.
  By Proposition~\ref{prop:center} there is an admissible transformation in $\Orth^+_{1,3}\RR$ which maps $\koebe$ to a Koebe realization $\koebe''$, over $\RR$, such that the origin is the edge barycenter.
  The edge barycenter lies in the interior of any Koebe realization.
  As $\Orth^+_{1,3}\KK$ is a dense subgroup of $\Orth^+_{1,3}\RR$ we conclude that, for any $\epsilon>0$, there is a Koebe realization $\koebe'$ of $P$, over $\KK$, such that the distance of the edge barycenter of $\koebe'$ to the origin is less than $\epsilon$.
  It follows that $\koebe'$ is a Koebe realization of $P$, over $\KK$, and the origin lies in the interior, as desired.
\end{proof}

Let us assume that $P$ has a realization $P = \conv(V)$ as a convex hull in $\RR^3$ such that the origin is in the interior of $P$.
Then the \emph{polar polytope} is
\[
  P^\vee \ = \ \SetOf{ a \in \RR^3 }{ a \cdot v\leq  1 \ \text{for all} \ v \in V } \enspace .
\]
The face lattice of $P^\vee$ is anti-isomorphic to the face lattice of $P$.
A polytope is \emph{dual} to $P$ if it is combinatorially equivalent to $P^\vee$; we denote the combinatorial type as $P^*$; see \cite[Section 2.3]{Ziegler:1995}.

\begin{proposition}\label{prop:dual}
  The Koebe degrees of $P$ and its dual $P^*$ agree.
\end{proposition}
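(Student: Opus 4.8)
The plan is to show that a Koebe realization of $P$ and a Koebe realization of its dual $P^*$ can be constructed over the same field, by exploiting the geometric duality that is compatible with tangency to the sphere. The key observation is that the polar operation interacts nicely with edge-tangency: if $\koebe$ is a polytope whose edges are tangent to $\Sph^2$, then its polar $\koebe^\vee$ is again edge-tangent to $\Sph^2$, and the contact point of an edge $e$ of $\koebe$ coincides with the contact point of the corresponding dual edge $e^*$ of $\koebe^\vee$. This is the classical self-duality of the Koebe construction: polarity with respect to the unit sphere fixes the sphere and sends the edge tangent at a point $p$ to the edge tangent at the same point $p$.

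**First I would** make precise the polarity map and verify the edge-tangency transfer. Let $\koebe$ be a Koebe realization of $P$ over $\KK = \QQ[\koebe]$. By Lemma~\ref{lem:interior} we may assume the origin lies in the interior of $\koebe$, still over $\KK$, so that the polar $\koebe^\vee$ is well defined as a polytope in $\RR^3$. The vertices of $\koebe^\vee$ are the outer facet normals of $\koebe$ scaled by the reciprocal of their distance to the origin; these are obtained from the vertex coordinates of $\koebe$ by rational operations (solving linear systems and normalizing), hence the vertices of $\koebe^\vee$ lie in $\KK$ as well. Thus $\QQ[\koebe^\vee]\subseteq\KK$.

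**The key geometric step** is to check that $\koebe^\vee$ is actually a Koebe realization, i.e.\ that its edges are tangent to $\Sph^2$. For an edge $e$ of $\koebe$ lying in the two facets $F_1,F_2$, the dual edge $e^*$ of $\koebe^\vee$ joins the two vertices polar to $F_1$ and $F_2$ and lies in the plane polar to the line spanned by $e$. A direct computation, or invocation of the standard fact that polarity with respect to $\Sph^2$ preserves the tangent lines to $\Sph^2$, shows that $e^*$ is tangent to $\Sph^2$ at the very same point $\tau_\koebe(e)$ where $e$ touches the sphere. Consequently $\koebe^\vee$ is a Koebe realization of $P^\vee$, which is combinatorially of type $P^*$. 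This yields $\kappa(P^*)\le\lvert\QQ[\koebe^\vee]:\QQ\rvert\le\lvert\QQ[\koebe]:\QQ\rvert$; taking $\koebe$ to attain the minimal degree gives $\kappa(P^*)\le\kappa(P)$. Since $(P^*)^*=P$ and the construction is symmetric, the reverse inequality follows identically, proving equality.

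**The main obstacle** I anticipate is the verification that the contact points are genuinely preserved by polarity and that the field of definition does not grow. The field-of-definition part is the more delicate bookkeeping: I must confirm that passing from $\koebe$ to $\koebe^\vee$ only involves $\KK$-rational linear algebra, which requires knowing the facet-defining inequalities of $\koebe$ have coefficients in $\KK$ — this follows because the facets are spanned by vertices in $\KK^3$. The tangency-preservation is classical but deserves a clean argument; the cleanest route is to note that polarity with respect to the unit sphere is the involution $a\mapsto a/\norm{a}^2$ on the level of supporting hyperplanes, which is an isometry of the exterior structure fixing $\Sph^2$ pointwise and sending tangent lines to tangent lines at the same contact point. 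I would write this out with the explicit tangency condition \eqref{eq:koebe-constraint} to keep everything algebraic and field-rational.
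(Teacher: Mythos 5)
Your proposal is correct and follows essentially the same route as the paper's proof: reduce to the case where the origin is interior via Lemma~\ref{lem:interior}, observe that polarity with respect to $\Sph^2$ sends tangent edges to tangent edges so that $\koebe^\vee$ is a Koebe realization of $P^*$, note that the polar's coordinates arise from $\KK$-rational linear algebra, and conclude by $P^{**}=P$. The extra detail you supply about contact points being fixed by polarity is a welcome refinement but not a different argument.
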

\begin{proof}
  Let $\koebe$ be a Koebe realization of $P$.
  In view of Lemma~\ref{lem:interior} we may assume that the origin lies in the interior of $\koebe$. 
  Notice that the polar $\koebe^\vee$ is a Koebe realization of the polar $P^\vee$: in $3$-space, the polar of an edge tangent to $\Sph^2$ is an edge which is again tangent.
  The vertices of the polar $\koebe^\vee$ correspond to the facets of $\koebe$.
  Since the facet coordinates of $\koebe$ can be derived from the vertices by solving systems of linear equations, the polar $\koebe^\vee$ is a Koebe realization of $P^*$ over the same field $\KK$.
  
  Applying this reasoning to a Koebe realization of minimal degree reveals that the Koebe degree of $P^*$ does not exceed the Koebe degree of $P$.
  Now the claim follows from the equality $P^{**}=P$.
\end{proof}

Our next goal is an explicit upper bound for the Koebe degree.
The following is the key ingredient; it may be seen as a sharpened version of Lemma~\ref{lem:welldef}.
For a field $\KK\subset\RR$ and a point $a$ in $\RR^N$ let $\KK[a]$ be the extension field over $\KK$ generated by the $N$ coordinates of $a$.

\begin{lemma}\label{lem:bound}
  Let $\KK\subset\RR$ be a field.
  Let $a$ be an isolated point of some real semialgebraic set in $\RR^N$ defined by polynomials of degree at most $D$ with coefficients in $\KK$.
  Then the degree of the field extension satisfies
  \[
    \lvert\KK[a]:\KK\rvert \ \leq \ (2D)^{2^{N}-1}2^{-N} \enspace.
  \]
\end{lemma}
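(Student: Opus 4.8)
The plan is to apply cylindrical algebraic decomposition (CAD) to $\RR^N$, adapted to the polynomials $\cF_N\subset\KK[x_1,\dots,x_N]$ defining the semialgebraic set, and to follow how the algebraic degree of the coordinates of $a$ grows through the projection and lifting phases of CAD; see \cite{CAD} and \cite[Section~5]{Basu:2006}. The projection phase produces families $\cF_{N-1},\dots,\cF_1$ with $\cF_{k-1}\subset\KK[x_1,\dots,x_{k-1}]$, formed from $\cF_k$ by taking leading coefficients, discriminants with respect to $x_k$, and resultants of pairs with respect to $x_k$. The key observation is that an isolated point of the semialgebraic set is a $0$-dimensional cell of the CAD, and its coordinates sit in a tower of field extensions whose successive degrees are controlled by the degrees of the $\cF_k$.

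First I would bound the degree growth of a single projection step. Writing $d_k$ for a uniform bound on the total degree of the members of $\cF_k$, so that $d_N=D$, I would use the Sylvester-matrix description of resultants: the resultant with respect to $x_k$ of two polynomials of total degree at most $d$ has total degree at most $\deg_{x_k}(f)\cdot\deg_y(g)+\deg_{x_k}(g)\cdot\deg_y(f)\le 2d^2$ in the remaining variables $y=(x_1,\dots,x_{k-1})$, and a discriminant, being a resultant of $f$ with $\partial f/\partial x_k$ divided by a leading coefficient, obeys the same estimate. This gives the recursion $d_{k-1}\le 2d_k^2$, holding simultaneously for every polynomial of $\cF_{k-1}$. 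Solving it with $d_N=D$ yields, by induction on $N-k$, the closed form
\[
  d_k \ \le \ \tfrac12\,(2D)^{2^{N-k}} \enspace .
\]

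Next I would treat the lifting phase. Since $a$ is isolated in the semialgebraic set $S$, and $S$ is a union of sign-invariant cells of the CAD, the point $a$ is necessarily a $0$-dimensional cell: a cell of positive dimension contained in $S$ would place a whole arc through $a$ inside $S$, contradicting isolation. In a $0$-cell every coordinate is a section, so $a_k$ is a real root of some polynomial of $\cF_k$ specialized at $(a_1,\dots,a_{k-1})$; by delineability this specialization is a nonzero element of $\KK[a_1,\dots,a_{k-1}][x_k]$ of degree at most $d_k$ in $x_k$. Hence each step of the tower $\KK\subseteq\KK[a_1]\subseteq\cdots\subseteq\KK[a_1,\dots,a_N]=\KK[a]$ satisfies $\bigl[\KK[a_1,\dots,a_k]:\KK[a_1,\dots,a_{k-1}]\bigr]\le d_k$.

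Finally I would multiply these estimates. By multiplicativity of degrees in a tower and the closed form above,
\[
  \lvert\KK[a]:\KK\rvert \ \le \ \prod_{k=1}^N d_k \ \le \ \prod_{k=1}^N \tfrac12\,(2D)^{2^{N-k}} \ = \ 2^{-N}(2D)^{\sum_{k=1}^N 2^{N-k}} \ = \ (2D)^{2^N-1}2^{-N} \enspace ,
\]
using $\sum_{k=1}^N 2^{N-k}=2^N-1$. I expect the main obstacle to lie in the projection-phase bookkeeping: one must check that $d_{k-1}\le 2d_k^2$ holds uniformly across all polynomials emitted by the chosen projection operator, and that the delineability guarantees of CAD make the specialized univariate polynomials in the lifting phase genuinely nonzero, so that the bound $\le d_k$ at each tower step is legitimate rather than vacuous.
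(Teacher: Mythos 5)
Your proposal follows essentially the same route as the paper: apply CAD, bound the degree growth of the projection phase by the recursion $d_{k-1}\le 2d_k^2$, observe that an isolated point is a $0$-dimensional cell so each coordinate generates an extension of degree at most $d_k$, and multiply through the tower to get $(2D)^{2^N-1}2^{-N}$. The only difference is that you spell out the resultant/discriminant degree bookkeeping and the $0$-cell argument explicitly, where the paper cites the complexity analysis of \cite[Section 11.1.1]{Basu:2006}; the substance is identical.
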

\begin{proof}
  We apply the method of cylindrical algebraic decomposition (CAD) to the semialgebraic set in $\RR^N$; see \cite[Section 5]{Basu:2006}.
  This is an effective version of Tarski--Seidenberg \cite[Proposition 5.2.3]{Bochnak:1998}, which works by iterated projections or variable eliminations.
  In the step at level $k$, for $1\leq k\leq N$, we obtain a set of polynomials $S_k$ with coefficients in $\KK$ defining a semialgebraic decomposition of $\RR^k$ into cells $\cC_k$, which are the projections of cells in $\cC_{k+1}$; see \cite[Section 11.1.1]{Basu:2006}.
  Because the decomposition is compatible with the initial system, the polynomials defining our semialgebraic set have constant sign over each cell in $\RR^{N}$.
  Each isolated point $a$ of the semialgebraic set yields a distinct cell, and its projection at every step is a cell consisting of a single point in $\RR^k$.
  Let $d_k$ be the maximum degree with respect to the $k$th variable of the polynomials in $S_k$.
  The algebraic degree $\lvert\KK[a]:\KK\rvert$ is bounded by the product of the degrees $d_k$, and
  the complexity analysis in \cite[Section 11.1.1]{Basu:2006} establishes that $d_{k}\leq 2(d_{k+1})^2$. Therefore, we have 
  \[
  \lvert\KK[a]:\KK\rvert \ \leq \prod_{k=1}^{N} d_k \leq \prod_{k=1}^{N-1} 2^{2^{k-1}-1} \cdot D^{2^{k-1}}.
  \]
  Hence we arrive at the doubly exponential upper bound $(2D)^{2^N-1}2^{-N}$.
\end{proof}

The following is our first main result.
\begin{theorem} \label{thm:triangular}
  Let $P$ be a $3$-polytope with $n=\# V$ vertices and a triangular facet.
  Then its Koebe degree is at most
  \begin{equation}\label{eq:triangular}
    \kappa(P) \ \leq \ 8^{2^{3n-9}-1} \cdot 2^{-(3n-9)} \ < \ 2^{2^{3n}} \enspace.
  \end{equation}
\end{theorem}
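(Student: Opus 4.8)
The strategy is to realize a Springborn realization of $P$ as an isolated point of a suitable semialgebraic set, and then apply Lemma~\ref{lem:bound}. The point is that by Proposition~\ref{prop:center} the Koebe realizations of $P$ form a single $\Orth_{1,3}^+\RR$-orbit, and this orbit is $6$-dimensional (since $\dim\Orth_{1,3}^+\RR=6$). So the realization space itself is not zero-dimensional. To isolate a point we must quotient out the group action by imposing enough normalizing constraints. The natural choice is to fix a triangular facet: if $F$ is a triangle with vertices $i,j,k$, then its three contact points lie on $\Sph^2$, and the sharp triple transitivity of $\PGL_2\LL$ on the projective line (noted after Lemma~\ref{lem:equivalent}) suggests we can rigidly pin down the realization by prescribing where these three contact points go. This removes the $6$ degrees of freedom of the Lorentz group, leaving an isolated point.

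First I would set up the coordinate count. A realization lives in $\RR^{3\times n}$, but after fixing a triangular facet we may assume, by applying an admissible transformation, that the contact points (or vertices) of that facet are placed at specific rational locations on $\Sph^2$; this eliminates the continuous symmetry. The remaining $n-3$ vertices contribute $3(n-3)=3n-9$ free coordinates, which matches the exponent $N=3n-9$ appearing in \eqref{eq:triangular}. Next I would assemble the defining polynomial system: the facet-coincidence and facet-orientation conditions from Remark~\ref{rem:realspace}, the edge-tangency conditions \eqref{eq:koebe-constraint}, and the normalization equations fixing the chosen facet. All of these have rational coefficients, and one checks their degrees are bounded by $D=4$ (the facet determinants are degree $4$ in the entries, but linear in each column; the tangency condition \eqref{eq:koebe-constraint} is quartic). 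This yields $2D=8$, explaining the base $8$ in the bound.

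Then I would invoke Lemma~\ref{lem:bound} with $N=3n-9$ and $D=4$: the resulting Springborn (hence Koebe) realization is an isolated point of a semialgebraic set defined by degree-$4$ polynomials over $\QQ$ in $3n-9$ variables, so
\[
  \kappa(P) \ \leq \ \lvert\QQ[\koebe]:\QQ\rvert \ \leq \ (2\cdot 4)^{2^{3n-9}-1}\,2^{-(3n-9)} \ = \ 8^{2^{3n-9}-1}\,2^{-(3n-9)} \enspace .
\]
The final inequality $8^{2^{3n-9}-1}2^{-(3n-9)}<2^{2^{3n}}$ is then a routine estimate: writing $8^{2^{3n-9}}=2^{3\cdot 2^{3n-9}}$ and noting $3\cdot 2^{3n-9}<2^{3n}$ for all relevant $n$ makes the bound transparent.

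The main obstacle I anticipate is establishing that the normalized realization is genuinely an \emph{isolated} point of the semialgebraic set, rather than merely a point of a positive-dimensional fibre. This is exactly where the uniqueness in Proposition~\ref{prop:center} and the sharp triple transitivity of the action must be combined: fixing the triangular facet must consume all $6$ continuous degrees of freedom of $\Orth_{1,3}^+\RR$, leaving only the discrete residual symmetry (rotations and reflections stabilizing the facet). One has to verify that the chosen normalization is compatible with admissibility and does not accidentally leave a residual one-parameter family. A secondary care point is the precise degree bookkeeping — confirming that $D=4$ suffices for every defining polynomial after normalization, so that the clean base $8=2D$ is correct.
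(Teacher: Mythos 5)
Your plan is essentially the paper's proof: the paper likewise pins down the triangular facet by fixing $v_1,v_2,v_3$ at rational points (using that $\SO_{1,3}^+\RR\cong\PGL_2\CC$ acts sharply triply transitively on $\Sph^2$, so only finitely many Koebe realizations survive and the point is isolated), reducing to $N=3n-9$ variables and rational defining polynomials of degree $D=4$, and then applies Lemma~\ref{lem:bound}. The only cosmetic difference is that the paper does not route through a Springborn realization — pinning the facet already kills the six-dimensional symmetry — so your opening framing is unnecessary but harmless.
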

\begin{proof}
  We label the vertices of the triangular face with $\{1,2,3\}$.
  Inside the realization space, we fix $v_1,v_2$ and $v_3$ in $\QQ^3$.
  There is at least one Koebe realization with these three vertices because $\SO_{1,3}^+\RR\cong\PSL_2\CC=\PGL_2\CC$ is triply transitive on the sphere $\Sph^2$, and $\QQ^2$ is dense in $\RR^2$.
  Thus we can find an admissible transformation.
  There are at most two such Koebe realizations because that group actually acts \emph{sharply} triply transitive on the sphere.
  We could have another realization coming from the whole group $\Orth_{1,3}^+\RR$, i.e., taking also the antipodal reflection into account.
  These finitely many Koebe realizations are cut out in $\RR^{3n-9}$ by rational equalities and inequalities of degree at most $4$. 
  Now Lemma~\ref{lem:bound} gives the desired bound for $N= 3n-9$ and $D=4$.
\end{proof}

If a $3$-polytope $P$ does not contain a triangular facet, then its dual $P^*$ does.
This follows from Euler's equation and double counting.
In that case, we get a bound for $\kappa(P^*)=\kappa(P)$ like \eqref{eq:triangular} by replacing $n$ with the number of facets of $P$.

Let us state a useful lemma which says that, for computing the Koebe degree, we can just look at contact points.

\begin{lemma}\label{lem:beta}
  Let $\KK\subset\RR$ be a field, and let $Q$ be some Koebe realization of~$P$.
  Then all the contact points $\tau_Q(e)$ lie in $\KK^3$ if and only if all the vertices lie in $\KK^3$.
  Consequently, in that case the edge barycenter $\beta_Q$ lies in $\KK^3$.
\end{lemma}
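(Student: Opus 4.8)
The plan is to prove both directions of the equivalence, then deduce the consequence for the edge barycenter. The geometric content is that each contact point $\tau_Q(e)$ is determined by the two endpoints of the edge $e$, and conversely each vertex is determined by the contact points on the edges incident to it. Throughout I would work in the affine chart $x_0=1$, so that vertices and contact points are genuine points of $\RR^3$.

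First I would treat the direction \enquote{contact points in $\KK^3$ $\Rightarrow$ vertices in $\KK^3$}. Fix a vertex $v$ of $Q$. Because $Q$ is a $3$-polytope, $v$ is incident to at least three edges, and the three corresponding contact points $p_1,p_2,p_3$ are the tangency points of the sphere with these edges. The key observation is that $v$ is the apex of the tangent cone to $\Sph^2$ through these points: for each contact point $p_\ell$ we have the tangency relation $v\cdot(v-p_\ell)\cdot(\ldots)$, more cleanly expressed as the linear equation $v\cdot p_\ell=1$ coming from the fact that $p_\ell$ lies on the polar hyperplane of $v$ with respect to the unit sphere (equivalently, the tangent plane to $\Sph^2$ at $p_\ell$ passes through $v$, which reads $p_\ell\cdot v=1$ since $\abs{p_\ell}=1$). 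Thus $v$ satisfies the three linear equations $p_\ell\cdot v=1$ for $\ell=1,2,3$ with coefficients in $\KK$, and since the $p_\ell$ are affinely independent (the edges span a $3$-dimensional cone at $v$) this linear system has a unique solution, which therefore lies in $\KK^3$ by Cramer's rule.

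Next the reverse direction \enquote{vertices in $\KK^3$ $\Rightarrow$ contact points in $\KK^3$}. Given an edge $e$ with endpoints $v_i,v_j\in\KK^3$, the contact point $\tau_Q(e)$ is the unique point on the segment $[v_i,v_j]$ lying on $\Sph^2$. Writing $\tau_Q(e)=v_i+t(v_j-v_i)$ and imposing $\abs{\tau_Q(e)}^2=1$ gives a quadratic equation in $t$ with coefficients in $\KK$; a priori this could force a quadratic extension. The point to exploit is the edge-tangency condition \eqref{eq:koebe-constraint}, which is exactly the statement that this quadratic has a double root. Hence its discriminant vanishes and the unique root is $t=-\,(v_i\cdot(v_j-v_i))/\norm{v_i-v_j}^2\in\KK$, so $\tau_Q(e)\in\KK^3$. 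I expect this to be the main obstacle: one must argue carefully that tangency (not mere intersection) forces the root to be rational over $\KK$, rather than appealing to intersection points that might be genuinely quadratic.

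Finally, the consequence for the barycenter is immediate: if all contact points lie in $\KK^3$, then $\beta_Q=\frac{1}{m}\sum_{e\in E}\tau_Q(e)$ is a $\KK$-linear combination of points in $\KK^3$, hence lies in $\KK^3$. This completes the argument, and I would remark that the equivalence is what licenses the earlier informal claim that one may read off the Koebe degree from the contact-point data alone.
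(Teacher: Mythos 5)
Your proof is correct, and it covers both implications at the level of detail the paper only sketches. The direction \enquote{contact points $\Rightarrow$ vertices} is exactly the paper's argument (the vertex is the intersection of the tangent planes $p_\ell\cdot x=1$ at the incident contact points), which the paper dispatches in one sentence; you spell it out. One small imprecision there: for Cramer's rule you need the $p_\ell$ to be \emph{linearly} independent, not merely affinely independent. This does hold, because the contact points of the edges through $v$ all lie on the horizon circle $\Sph^2\cap\{x: v\cdot x=1\}$, a circle in an affine plane avoiding the origin, so three distinct such points are linearly independent; it is worth adding that sentence. For the direction \enquote{vertices $\Rightarrow$ contact points} you take a genuinely different computational route from the paper: you parametrize the edge, observe that tangency means the quadratic $\norm{v_i+t(v_j-v_i)}^2=1$ has a double root (this is precisely the constraint \eqref{eq:koebe-constraint}), and read off $t=-\,(v_i\cdot(v_j-v_i))/\norm{v_i-v_j}^2\in\KK$. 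The paper instead writes $\tau_Q(e)=v+\tfrac{\ell_v}{\ell_v+\ell_w}(w-v)$ using the tangent lengths $\ell_v,\ell_w$ and rationalizes the coefficient via $\ell_v^2=\norm{v}^2-1$ and $2\ell_v\ell_w=\norm{w-v}^2-\norm{w}^2-\norm{v}^2+2$. Your discriminant argument is arguably cleaner, since it invokes the defining tangency equation directly rather than the auxiliary identity $\ell_v+\ell_w=\norm{w-v}$; the paper's version has the minor advantage of introducing the quantities $\ell_v$ that are reused later (Lemma~\ref{lem:3dimsim}). The barycenter conclusion is immediate in both treatments.
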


\begin{proof}
  Suppose that the vertices lie in $\KK^3$.
  Consider the edge $e$ linking the vertices $w$ and $v$.
  We define $\ell_{w}^2:=\norm{w}^2-1$, which is the length of the segment between $w$ and any of the contact points on the sphere $\Sph^2$ associated with the edges through $w$.
  Similarly for $\ell_v$.
  Both quantities lie in $\KK$.
  The contact point now satisfies
  \[
    \tau_Q(e) \ = \ \frac{w-v}{\ell_w+\ell_v}\,\ell_v+v \ = \ \frac{w-v}{(\ell_w+\ell_v)^2}\, (\ell_v^2+\ell_v\ell_w)+v
  \]
  and $2\ell_v\ell_w=\norm{w-v}^2-\norm{w}^2-\norm{v}^2+2$.
  The reverse implication follows from the description of the vertices as intersection points of the planes tangent to the sphere at the contact points. 
\end{proof}

\begin{remark}\label{rem:ratKoeb}
  Let $P$ be a $3$-polytope which admits a rational Koebe realization $\koebe$.
  If $\koebe'$ is a Koebe realization of $P$ with three rational contact points, then we know that $\koebe'$ is rational.
  It can be shown that there is an admissible rational transformation $T\in\PGL_4\QQ$ which maps $\koebe$ to $\koebe'$.
  This provides a direct way to decide whether a $3$-polytope has a rational Koebe realization or not.
  More generally, the Koebe degree is computable via CAD; cf.\ Lemma~\ref{lem:bound}.
  Note that the transformation $T$ does not need to lie in $\Orth_{1,3}^+\QQ$.
\end{remark}

Next we define the \textit{Springborn degree} $\sigma(P)$ as the minimal algebraic degree of any Springborn realization of $P$.
This is still well defined and finite thanks to Lemma \ref{lem:welldef} and its proof.
Yet the Springborn degree is somewhat better accessible than the Koebe degree.
One reason is given in the following lemma.

\begin{proposition}\label{prop:vol}
  Let $\koebe$ be a Springborn realization of $P$.
  Then the algebraic degree of the volume $\vol\koebe$ divides the Springborn degree $\sigma(P)$.
\end{proposition}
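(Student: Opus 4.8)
The plan is to exploit the rigidity supplied by Proposition~\ref{prop:center}: any two Springborn realizations of $P$ differ by a rotation or reflection of $\RR^3$. Since the Euclidean volume is invariant under such isometries, the number $\vol\koebe$ is the \emph{same} for every Springborn realization of $P$; call this common value $V$. In particular $\abs{\QQ[V]:\QQ}$, the algebraic degree of the volume appearing in the statement, does not depend on the chosen realization, so it suffices to exhibit $V$ inside the coordinate field of one conveniently chosen Springborn realization.

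First I would pick a Springborn realization $\koebe_0$ of minimal degree, so that $\abs{\QQ[\koebe_0]:\QQ}=\sigma(P)$ by definition of the Springborn degree. As $\koebe_0$ is a Springborn realization, its edge barycenter is the origin, and the edge barycenter lies in the interior of any Koebe realization (see the proof of Lemma~\ref{lem:interior}); hence the origin lies in the interior of $\koebe_0$. Coning from the origin to each facet, with each facet triangulated according to the fixed combinatorial type of $P$, decomposes $\koebe_0$ into tetrahedra, each having the origin as a vertex. The signed volume of such a tetrahedron spanned by $0$ and three vertices $v_a,v_b,v_c$ is $\tfrac16\det(v_a\,v_b\,v_c)$, a rational polynomial in the vertex coordinates. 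Summing these with the appropriate fixed signs yields $V=\vol\koebe_0\in\QQ[\koebe_0]$. Consequently $\QQ[V]\subseteq\QQ[\koebe_0]$, and the tower law $\abs{\QQ[\koebe_0]:\QQ}=\abs{\QQ[\koebe_0]:\QQ[V]}\cdot\abs{\QQ[V]:\QQ}$ shows that $\abs{\QQ[V]:\QQ}$ divides $\sigma(P)$, as claimed.

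The step I expect to require the most care is the assertion that the orientation signs in the tetrahedral decomposition are constant, so that $V$ is a genuine polynomial — rather than merely a piecewise-polynomial — expression with rational coefficients in the coordinates. This is where convexity and the fixed combinatorial type are essential: they guarantee both a consistent triangulation of the boundary and a consistent orientation of each cone tetrahedron as the origin sits strictly inside. The conceptual crux, however, is the preceding observation that $\vol\koebe$ is a well-defined invariant at all; this follows from the invariance of volume under rotations and reflections together with Proposition~\ref{prop:center}, and it is what lets us compute with the single minimal-degree realization $\koebe_0$ while drawing a conclusion valid for every Springborn realization.
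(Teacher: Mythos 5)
Your proof is correct and follows essentially the same route as the paper's: volume is a rational polynomial in the vertex coordinates (the paper triangulates without new vertices, you cone from the origin, a harmless variant since the origin is rational), and Proposition~\ref{prop:center} makes $\vol\koebe$ a common invariant of all Springborn realizations, so the tower law applied to a minimal-degree realization gives divisibility.
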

\begin{proof}
  First, the volume of a polytope $P$ can be written as a rational polynomial in the vertex coordinates; this follows, e.g., from triangulating $P$ without new vertices.
  Second, by Proposition~\ref{prop:center} any two Springborn realizations only differ by a linear isometry of $\RR^3$, which preserves the volume.
  This implies that the volume is in $\mathbb{Q}[\koebe]$, for every Springborn realization $\koebe$.
  Thus the claim.
\end{proof}

As in the proof of Proposition \ref{prop:dual} we may conclude that the Springborn degrees of $P$ and its dual agree, i.e., $\sigma(P)=\sigma(P^*)$.
Together with Example~\ref{ex:simplex}, the next two examples describe the Springborn degrees of all five Platonic solids.
\begin{example}\label{ex:octahedron}
  The six columns of the matrix
  \[
    \begin{pmatrix}
      \pm \sqrt{2} & 0 & 0 \\
      0 & \pm \sqrt{2} & 0 \\
      0 & 0 & \pm \sqrt{2}
    \end{pmatrix}
  \]
  provide a degree two Springborn realization of the octahedron.
  Its volume equals $\frac{8\sqrt{2}}{3}$, and so Proposition~\ref{prop:vol} shows that the Springborn degree of the octahedron equals two.
  Moreover, the six columns of the matrix
  \[
    \begin{pmatrix}
      -1 & 1 & 1 & x & -y & -y \\
      1 & -1 & 1 & -y & x & -y \\
      1 & 1 & -1 & -y & -y & x \\

    \end{pmatrix}
  \]
  where $y=\frac{1}{17}\sqrt{137+48\sqrt{2}}$  and $x=\frac{7-4\sqrt{2}}{17}$, give a degree four Koebe realization of the octahedron. This realization has three rational contact points. By Remark \ref{rem:ratKoeb}, the Koebe degree of the octahedron cannot be one, and therefore, it is equal to two.

  The same holds for the cube, which is dual to the octahedron.
\end{example}
\begin{example}
  The dodecahedron has a Springborn realization given by the 20 columns of the matrix
  \[
    \begin{pmatrix}
      0 & \pm \psi^{2} & \pm 1 & \pm \psi \\
      \pm 1 & 0 & \pm\psi^{2} & \pm \psi \\
      \pm \psi^{2} & \pm 1 & 0  & \pm \psi
    \end{pmatrix} \enspace ,
  \]
  where $\psi=\tfrac{2}{1+\sqrt{5}}$ is the reciprocal of the golden ratio.
  Since the squared norm of the vertices is not rational, there is no Springborn representative with rational coordinates.
  The Springborn degree of the dodecahedron and its dual, the icosahedron, equals two.
\end{example}

We now want to explore the relation between these two algebraic notions. 
Our next result relates the Koebe and Springborn degrees.

\begin{proposition}\label{prop:kob=sch}
  For a $3$-polytope $P$ with $m$ edges we have
  \[
    \kappa(P) \ \leq \ \sigma(P) \ \leq \ \frac{(2m+2)^{7}}{4} \cdot \kappa(P) \enspace .
  \]
\end{proposition}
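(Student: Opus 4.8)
The plan is to prove the two inequalities separately. The left inequality $\kappa(P)\leq\sigma(P)$ is almost immediate: every Springborn realization is in particular a Koebe realization, so the minimal degree over Springborn realizations cannot be smaller than the minimal degree over all Koebe realizations. First I would state this observation, and the content of the proposition really lies in the right-hand inequality, which bounds how much the extra barycenter constraint can inflate the algebraic degree.

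For the upper bound $\sigma(P)\leq\frac{(2m+2)^7}{4}\cdot\kappa(P)$, the idea is to start from a Koebe realization $\koebe$ of minimal degree, so that $\lvert\QQ[\koebe]:\QQ\rvert=\kappa(P)$, and set $\KK=\QQ[\koebe]$. By Lemma~\ref{lem:beta} the contact points $\tau_\koebe(e)$ all lie in $\KK^3$, hence the edge barycenter $\beta_\koebe$ lies in $\KK^3$ as well. Now Proposition~\ref{prop:center} furnishes a transformation $T\in\Orth_{1,3}^+\RR$, admissible to $\koebe$, with $\beta_{T\koebe}=0$; the polytope $T\koebe$ is then a Springborn realization of $P$. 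The whole point is that $T$ is determined by the combinatorial data together with the contact points through an explicit algebraic condition — namely, $T$ is the Lorentz transformation sending the point of minimal distance sum (equivalently, the point making the transformed barycenter vanish) to the origin. I would describe $T$ via the celestial-sphere picture of Section~\ref{sect:groups}: the condition $\beta_{T\koebe}=0$ is a system of polynomial equations in the entries of $T$ with coefficients in $\KK$, so $T$ (and hence the Springborn realization $T\koebe$) is an isolated solution of a semialgebraic system defined over $\KK$.

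With that setup, the degree estimate follows from Lemma~\ref{lem:bound}. The Springborn realization $T\koebe$ is cut out, over the ground field $\KK$, as an isolated point of a semialgebraic set in some $\RR^N$ by polynomials of bounded degree $D$; the tangency conditions \eqref{eq:koebe-constraint} and the barycentering equation defining $\beta=0$ are the natural equations, and their number of variables and degrees are controlled by $m$, the number of edges. Feeding the appropriate $N$ and $D$ into Lemma~\ref{lem:bound} yields a bound of the form $\lvert\KK[T\koebe]:\KK\rvert\leq\frac{(2m+2)^7}{4}$ (matching the stated constant), and then the tower law gives
\[
  \sigma(P) \ \leq \ \lvert\QQ[T\koebe]:\QQ\rvert \ = \ \lvert\KK[T\koebe]:\KK\rvert\cdot\lvert\KK:\QQ\rvert \ \leq \ \frac{(2m+2)^7}{4}\cdot\kappa(P) \enspace .
\]

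The main obstacle I anticipate is pinning down the precise algebraic model — the right choice of variables $N$ and degree bound $D$ — that makes Lemma~\ref{lem:bound} output exactly the constant $\frac{(2m+2)^7}{4}$ rather than merely something polynomial in $m$. This requires a careful bookkeeping of how the barycentering transformation $T$ is parametrized (for instance, whether one works with $T$ acting on the sphere, or directly with the minimal-distance-sum characterization of Proposition~\ref{prop:center}) and how the horosphere distance functions of \eqref{fun:minspri} translate into polynomial equations of small degree over $\KK$. Getting the exponent $7$ and the division by $4$ to come out correctly is the delicate part; the rest is assembling the field-extension tower and invoking the already-established lemmas.
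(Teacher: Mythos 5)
Your overall skeleton matches the paper's: the left inequality is trivial, and for the right one you start from a minimal-degree Koebe realization $\koebe$ with $\KK=\QQ[\koebe]$, invoke Proposition~\ref{prop:center} to produce a Springborn realization $T\koebe$, bound $\lvert\KK[T\koebe]:\KK\rvert$ via Lemma~\ref{lem:bound}, and finish with the tower law. But the step you leave open is exactly the one that carries the content, and the route you sketch for it would fail. You propose to treat $T$ itself as an isolated solution of the polynomial system $\beta_{T\koebe}=0$ in the entries of $T$. That set of transformations is not isolated: by Proposition~\ref{prop:center} it is a full orbit $\{RT\}$ under the rotation group, a three-dimensional family, so Lemma~\ref{lem:bound} does not apply to it. Moreover, even setting that aside, the number of variables would be the number of entries of $T$ (or at best $\dim\Orth^+_{1,3}\RR=6$ after some parametrization), and since Lemma~\ref{lem:bound} is doubly exponential in $N$, you could never recover a constant as small as $(2m+2)^7/4$; the exponent $7=2^3-1$ is a fingerprint telling you that the lemma must be applied with $N=3$.

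The missing idea is to apply Lemma~\ref{lem:bound} not to $T$ but to the \emph{point of minimal distance sum} $a\in I\subset\RR^3$. The paper writes the function \eqref{fun:minspri} explicitly as $\sum_{e\in E}\log(1-\tau_\koebe(e)\cdot x')-\frac{m}{2}\log(1-\norm{x'}^2)$; convexity makes its unique minimum the common zero of the three gradient components, which after clearing denominators are polynomials of degree at most $m+1$ in the three coordinates of $x'$, with coefficients in $\KK$ by Lemma~\ref{lem:beta}. With $N=3$ and $D=m+1$, Lemma~\ref{lem:bound} gives $\lvert\KK[a]:\KK\rvert\leq(2m+2)^7/8$. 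One then builds the transformation $T$ moving $a$ to the origin over the field $\KK\bigl[a,\sqrt{1-\norm{a}^2}\bigr]$; the adjoined square root contributes the remaining factor of $2$, turning $(2m+2)^7/8$ into the stated $(2m+2)^7/4$. You gesture at the minimal-distance-sum characterization as a possible alternative, but without carrying it out the proof is not complete, and the $T$-based system you do propose cannot be repaired to yield the claimed constant.
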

\begin{proof}
  The first inequality is trivial, and we focus on the second one.
  To this end let $\koebe$ be a Koebe realization with $\KK=\QQ[\koebe]$ of minimal degree $\kappa(P)=\lvert\KK:\QQ\rvert$.
  We want to employ Proposition~\ref{prop:center} and provide an explicit version of the function \eqref{fun:minspri} on the Klein open $3$-ball $I$.
%To do this, we will write it as a function in the Klein ball model $B_{K}$. We need to understand the distance dist$(\underline{x},h_i)$ where $h_i$ is the horosphere centered in the $i$th contact point $\tau_Q(e_i)$. Consider the Poincaré half-space model $H^+:=\{(x,y,z)\mid z>0\}$ and the isometry $\psi_i:B_{K}\to H^+$ that brings the ith contact point to the unique infinity point in  $H^+$ not in $z=0$. To construct such an isometry, we have the following procedure: see the Klein ball as the ball in $\{x_0=1\}\subset \RR^4=\{(x_0,x_1,x_2,x_3)=(x_0,\underline{x})\mid x_i\in\RR\}$, then project $B_{K}$ on the upper emisphere $S^3\cap \{x_0>0\}$ via the map \[(1,\underline{x})\to (\sqrt{1-\norm{\underline{x}}^2},\underline{x})\] and apply the stereographic projection from $(0,\tau_Q(e_i))$ to the $3$--hyperplane $(0,\tau_Q(e_i))\cdot\underline{x}=-1$. The function dist$(\underline{x},h_i)$ in $H^+$ is now easy to describe: \[\log(\frac{c_i}{z})\] where $c_i$ depends on the horosphere $h_i$ that we fixed. The value of this function in the Klein ball model is therefore \[\log\left(\frac{c_i(1-\tau_Q(e_i)\cdot \underline{x})}{2\sqrt{1-\norm{\underline{x}}^2}}\right).\] Summing over all contributes, t
  With $x=(1,x_1,x_2,x_3)\in I$ and $x':=(x_1,x_2,x_3)$ this reads
  \[
    \sum_{e\in E}\log(1-\tau_Q(e)\cdot x')-\frac{m}{2}\log(1-\norm{x'}^2) \enspace .
  \]
  Since this is a convex and differentiable function on $I$, the unique minimum is characterized by the vanishing of the gradient.
  Consequently, the point of minimal distance sum is the unique common zero in $I$ of the three polynomials
  \[
    p_k(x) \ := \ mx_k\prod_{e\in E} (1-\tau_Q(e)\cdot x')-\sum_{e\in E} (\tau_Q(e))_k(1-\norm{x'}^2)\prod_{f\neq e}(1-\tau_Q(f)\cdot x')
  \]
  where $k=1,2,3$.
  That zero, $a$, is an isolated solution to a system of semialgebraic constraints, which is why Lemma \ref{lem:bound} applies.
  With the notation of Lemma \ref{lem:bound} we have $D\leq m+1$ and $N=3$, which entails
  \[
    \lvert\KK[a]:\KK\rvert \ \leq \ \frac{(2m+2)^{7}}{8} \enspace.
  \]
  We can find a transformation $T\in\Orth_{1,3}^+\KK\big[a,\sqrt{1-\norm{a}^2}\big]$ which brings the point of minimal distance sum of $\koebe$ to the origin.
  By the discussion preceding Proposition~\ref{prop:center} it follows that $T\koebe$ is a Springborn realization of $P$.
  Due to Lemma~\ref{lem:beta} its vertex coordinates lie in $\KK\big[a,\sqrt{1-\norm{a}^2}\big]$, and this gives the bound.
\end{proof}

In the sequel we write $B_k$ for the bipyramid over a $k$-gon, where $k\geq 3$.
We let $\phi$ denote Euler's totient function; i.e., $\phi(n)$ counts the positive integers up to $n$ that are relatively prime to $n$.
The case $k=4$ is the octahedron, and $\phi(4)=2$.
Recall that we found $\sigma(B_4)=2$ in Example~\ref{ex:octahedron}.
\begin{theorem}\label{thm:Springbornbipyr}
  Let $k\geq 4$ be an integer.
  If $k$ is odd, the Springborn degree $\sigma(B_k)$ is either $\frac{1}{2}\phi(k)$ or $\phi(k)$.
  If $k$ is even, the Springborn degree $\sigma(B_k)$ equals $\phi(k)$.
  In particular, the Springborn degree of a $3$-polytope is not bounded by any constant.
\end{theorem}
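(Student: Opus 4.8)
The plan is to sandwich $\sigma(B_k)$ between a constructive upper bound, coming from an explicit highly symmetric Springborn realization, and a lower bound extracted from the volume via Proposition~\ref{prop:vol}. Throughout I abbreviate $c=\cos(\pi/k)$ and $s=\sin(\pi/k)$. First I would write down the rotationally symmetric realization: place the two apices on the $z$-axis at $(0,0,\pm h)$ and the $k$ equatorial vertices at $\sec(\pi/k)\cdot(\cos(2\pi j/k),\sin(2\pi j/k),0)$ for $j=0,\dots,k-1$. A short computation shows that the equatorial edges are tangent to $\Sph^2$ exactly when the circumradius equals $\sec(\pi/k)$, since their midpoints then lie at distance $1$ from the origin, and that the apex edges are tangent exactly when $h=\csc(\pi/k)$, using that the distance from the origin to the line through $(0,0,h)$ and an equatorial vertex is $h\rho/\sqrt{\rho^2+h^2}$. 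This produces a Koebe realization. To see it is a Springborn realization I would check that the edge barycenter vanishes: the $k$ equatorial contact points are the equally spaced unit vectors $(\cos((2j+1)\pi/k),\sin((2j+1)\pi/k),0)$ and sum to zero, while the two apex rings of contact points are exchanged by $z\mapsto -z$ and each projects to an equally spaced horizontal ring, so all coordinates cancel.

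Next comes the coordinate field. Reading off the vertex $(\sec(\pi/k),0,0)$ gives $c^{-1}$, hence $c$, while the equatorial vertex for $j=1$ contributes $2s$; conversely every coordinate is a polynomial in $c$ and $s$, so the realization field is exactly $\QQ(c,s)=\QQ(\cos(\pi/k),\sin(\pi/k))$. Its degree is the arithmetic heart of the argument. Since $\cos(\pi/k)=\cos(2\pi/2k)$ generates the maximal real subfield $\QQ(\zeta_{2k})^{+}$, we have $[\QQ(c):\QQ]=\phi(2k)/2$, and $s$ is at most quadratic over $\QQ(c)$ because $s^2=1-c^2$. The point is that $s\in\QQ(c)$ if and only if $i\in\QQ(\zeta_{2k})$, that is, if and only if $4\mid 2k$, equivalently $k$ is even. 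Combining this with $\phi(2k)=\phi(k)$ for odd $k$ and $\phi(2k)=2\phi(k)$ for even $k$, one finds $[\QQ(c,s):\QQ]=\phi(k)$ in \emph{both} parities, yielding the upper bound $\sigma(B_k)\le\phi(k)$.

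For the lower bound I would compute the volume of the symmetric realization, namely $\vol(B_k)=\tfrac{2k}{3}\sec(\pi/k)$, obtained as base area $k\tan(\pi/k)$ times apex height $\csc(\pi/k)$, doubled and divided by $3$. Its degree over $\QQ$ equals $[\QQ(c):\QQ]=\phi(2k)/2$, which is $\phi(k)$ for even $k$ and $\phi(k)/2$ for odd $k$. Since all Springborn realizations are isometric by Proposition~\ref{prop:center}, Proposition~\ref{prop:vol} shows this degree divides $\sigma(B_k)$. For even $k$ the lower bound $\phi(k)$ meets the upper bound and forces $\sigma(B_k)=\phi(k)$. For odd $k$ we obtain only $\phi(k)/2\mid\sigma(B_k)\le\phi(k)$, and the only multiples of $\phi(k)/2$ in that range are $\phi(k)/2$ and $\phi(k)$, which gives the stated dichotomy. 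Unboundedness is then immediate, since $\phi(k)/2\to\infty$.

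The main obstacle I anticipate is the uniform degree computation $[\QQ(\cos(\pi/k),\sin(\pi/k)):\QQ]=\phi(k)$: one must treat the two parities of $k$ carefully, and the crux is the criterion $s\in\QQ(c)\iff i\in\QQ(\zeta_{2k})$, whose verification requires knowing precisely which cyclotomic field contains $i$. The odd case is genuinely not resolved by this method: the volume invariant is too coarse to distinguish $\phi(k)/2$ from $\phi(k)$, which is exactly why the statement leaves the two options open for odd $k$.
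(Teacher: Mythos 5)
Your proof is correct and follows essentially the same route as the paper: the explicit rotationally symmetric realization \eqref{eq:Springbornbipyr}, the volume $\tfrac{2k}{3\cos(\pi/k)}$ combined with Proposition~\ref{prop:vol} for the lower bound, and the criterion $\sin(\pi/k)\in\QQ(\cos(\pi/k))\iff i\in\QQ(\zeta_{2k})\iff k$ even for the field degree. The only cosmetic difference is that you state the upper bound uniformly as $\phi(k)$ for both parities and supply the tangency and barycenter verifications that the paper leaves implicit.
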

\begin{proof}
  An explicit Springborn realization of $B_k$ is given by 
  \begin{equation}\label{eq:Springbornbipyr}
    \renewcommand\arraystretch{2}
    \begin{pmatrix}
      \frac{\cos(2\pi/k)}{\cos(\pi/k)} & \cdots &  \frac{\cos(2k\pi/k)}{\cos(\pi/k)} & 0 \\
      \frac{\sin(2\pi/k)}{\cos(\pi/k)} & \cdots &  \frac{\sin(2k\pi/k)}{\cos(\pi/k)} & 0 \\
      0 & \cdots & 0 & \pm\frac{1}{\sin(\pi/k)} 
    \end{pmatrix}
  \end{equation}
  The volume of that realization of $B_k$ is given by 
  \[ V_k \ = \ \frac{2k}{3\cos(\pi/k)} \enspace.\]
  As reported in \cite{Watkins:1993}, the algebraic degree of $\cos(\pi/k)$ equals $\tfrac{1}{2}\phi(2k)$.
  Further, the realization \eqref{eq:Springbornbipyr} exists over the field $\QQ[\sin(\pi/k),\cos(\pi/k)]$. 
  Thanks to Proposition~\ref{prop:vol}, the Springborn degree $\sigma(B_k)$ is either $\tfrac{1}{2}\phi(2k)$ or $\phi(2k)$.
  Notice that $\phi(2k)=2\phi(k)$ for $k$ even, and $\phi(2k)=\phi(k)$ when $k$ is odd.
  
  Moreover, $\sigma(B_k)$ equals $\tfrac{1}{2}\phi(2k)$ if $\QQ[\cos(\pi/k),\sin(\pi/k)]=\QQ[\cos(\pi/k)]$.
  We claim the latter holds if and only if $k$ is even.
  Let $\zeta_{2k}$ be a $2k$th primitive root of unity.
  Since 
  \[ \lvert\QQ[\zeta_{2k},i]:\QQ\rvert \ = \ \lvert\QQ[\zeta_{2k},i]:\QQ[\zeta_{2k}]\rvert\cdot\phi(2k) \]
  and
  % spacing left and right of "=" reduced slightly because of line width limits
  \[
    \begin{aligned}
     \lvert \QQ[\zeta_{2k},i]:\QQ\rvert \, &= \, \lvert\QQ[\zeta_{2k},i]:\QQ[\cos(\pi/k),\sin(\pi/k)]\rvert \cdot \lvert\QQ[\cos(\pi/k),\sin(\pi/k)]:\QQ\rvert \\
      &=\, 2\cdot \lvert\QQ[\cos(\pi/k),\sin(\pi/k)]:\QQ[\cos(\pi/k)]\rvert\cdot\lvert\QQ[\cos(\pi/k)]:\QQ\rvert \\
      &=\, 2\cdot\lvert\QQ[\cos(\pi/k),\sin(\pi/k)]:\QQ[\cos(\pi/k)]\rvert\cdot\frac{\phi(2k)}{2} \enspace,
    \end{aligned}
  \]
  to prove the claim we need to show that $i\in \QQ[\zeta_{2k}]$ if and only if $k$ is even.
  This follows as the roots of unity in $\QQ[\zeta_{2k}]$ are of order dividing $2k$.
  We conclude that $\sigma(B_k)=\tfrac{1}{2}\phi(2k)=\phi(k)$ if $k$ is even.
\end{proof}
\begin{figure}
\centering
\includegraphics[scale=0.2]{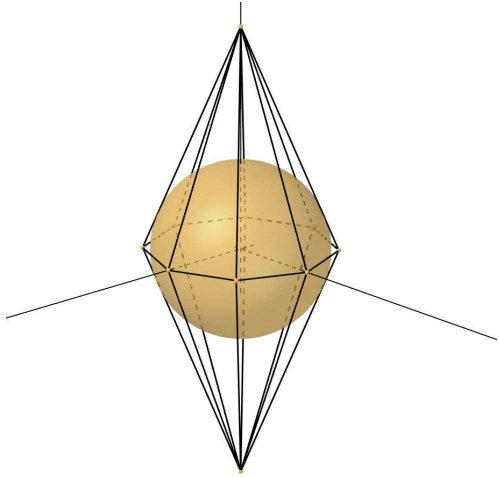}
\caption{A Springborn realization of the bipyramid $B_8$.}
\end{figure}

\begin{remark}
  From the explicit coordinate representation in~\eqref{eq:Springbornbipyr} we see that, in a Springborn realization, the $k$ \enquote{equatorial} vertices of the bipyramid $B_k$ form the vertices of a regular $k$-gon in the plane.
  Determining the algebraic degrees of the vertices of the regular polygons (and deciding their constructibilty with ruler and compass) is the topic of the final section of Gauss's \emph{Disquisitiones Arithmeticae} \cite[Section~366]{Disquisitiones}.
  For a recent account of the history of that famous result and its proof see \cite{AndersonChahalTop:2110.01355}.
\end{remark}

So far we lack lower bounds for the Koebe degrees, which seem somewhat harder to come by.
Yet for the bipyramids we are able to say something.
This is based on the following idea.
Four pairwise distinct complex numbers $a,b,c,d\in\CC$ define the \emph{cross ratio}
\begin{equation}\label{eq:crossratio}
  (a,b;c,d) \ = \ \frac{(c-a)(d-b)}{(c-b)(d-a)} \enspace ,
\end{equation}
which lies in $\CC\setminus\{0,1\}$, and which is invariant under the action of $\PGL_2\CC$ on the complex projective line $\CC\cup\{\infty\}$.
The cross ratio is invariant with respect to homogenization, which is why it also works with homogeneous coordinates.
Identifying the latter with $\Sph^2$ by stereographic projection makes this applicable to the contact points of any four edges in a Koebe realization.
Up to a rotation we may assume that none of the contact points is the north pole $\infty$ of that projection.

\begin{lemma}\label{lem:crossinv}
  Let $\koebe$ be a Koebe realization of $P$ with contact points in $\KK=\QQ[\koebe]$.
  Then the cross ratio of any four contact points lies in $\LL=\KK[i]$, and the degree of its real and imaginary part over $\QQ$ is a lower bound for $\kappa(P)$.
\end{lemma}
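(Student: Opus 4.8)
The plan is to treat the two assertions separately: first the containment of the cross ratio in $\LL$, then its role as a lower bound for $\kappa(P)$.

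For the containment, I would invoke the discussion in Section~\ref{sect:groups}: since the contact points of $\koebe$ all lie in $\Sph^2_\KK=\Sph^2\cap\KK^3$, the stereographic projection $\sigma$ sends each of them to a point of the projective line $\LL\cup\{\infty\}$. Writing the four contact points in homogeneous coordinates $(z_j:w_j)$ with $z_j,w_j\in\LL$, the cross ratio is a ratio of products of the $2{\times}2$ determinants $z_jw_k-z_kw_j$; since the cross ratio is invariant under homogenization, this homogeneous expression agrees with~\eqref{eq:crossratio} wherever the latter is defined. All entries lie in $\LL$, and the relevant determinants are nonzero because distinct edges have distinct contact points, so the cross ratio lies in $\LL$. (Equivalently, after a rotation placing none of the contact points at the north pole $\infty$, one applies~\eqref{eq:crossratio} directly to four elements of $\LL$.)

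For the lower bound, the key point is that the cross ratio is an invariant of $P$ itself, up to complex conjugation. Let $\koebe'$ be a Koebe realization of minimal degree, so $\kappa(P)=\lvert\KK':\QQ\rvert$ with $\KK'=\QQ[\koebe']$, and let $\chi,\chi'\in\CC$ be the cross ratios of the contact points of the same four edges of $P$, computed in $\koebe$ and in $\koebe'$ respectively. Any two Koebe realizations of $P$ differ by a transformation in $\Orth_{1,3}^+\RR=\SO_{1,3}^+\RR\rtimes\langle\diag(1,-1,-1,-1)\rangle$, so I would analyze how this group acts on cross ratios. By Lemma~\ref{lem:equivalent} the subgroup $\SO_{1,3}^+\RR\cong\PSL_2\CC\subseteq\PGL_2\CC$ acts on $\CC\cup\{\infty\}$ by Möbius transformations, which preserve the cross ratio. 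For the remaining generator I would compute, using~\eqref{eq:stereo}, that the antipodal map $\diag(1,-1,-1,-1)$ acts on the projective line as $\zeta\mapsto -1/\bar\zeta$; decomposing this anti-conformal map as complex conjugation followed by the Möbius transformation $\zeta\mapsto -1/\zeta$ shows that it sends the cross ratio to its complex conjugate. Hence $\chi'\in\{\chi,\bar\chi\}$.

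Finally I would assemble the estimate. Applying the first assertion to $\koebe'$ gives $\chi'\in\LL'=\KK'[i]$; since $\LL'$ is closed under complex conjugation and $\chi'\in\{\chi,\bar\chi\}$, both $\chi$ and $\bar\chi$ lie in $\LL'$, and therefore $\Re\chi,\Im\chi\in\KK'$. Consequently $\QQ[\Re\chi,\Im\chi]$ is a subfield of $\KK'$, and the tower law shows that $\lvert\QQ[\Re\chi,\Im\chi]:\QQ\rvert$ divides $\lvert\KK':\QQ\rvert=\kappa(P)$; in particular it is at most $\kappa(P)$, which is the claimed lower bound. The main obstacle is the precise action of the antipodal map on the cross ratio: verifying that $\diag(1,-1,-1,-1)$ corresponds to $\zeta\mapsto -1/\bar\zeta$ and recognizing it as an orientation-reversing map that conjugates the cross ratio is where the care is needed, since a treatment that ignores this coset would only control cross ratios up to the action of $\PGL_2\CC$ and would miss the passage to $\bar\chi$ that makes the conjugation-closedness of $\LL'$ available.
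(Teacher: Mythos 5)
Your proof is correct and takes essentially the same route as the paper: containment in $\LL$ via the stereographic projection onto $\LL\cup\{\infty\}$ together with the cross ratio being a rational function of its parameters, and then the lower bound via the invariance of the cross ratio under the transformations in $\Orth_{1,3}^+\RR$ relating any two Koebe realizations, combined with Lemma~\ref{lem:beta}. If anything, you are more careful than the paper's two-line proof, which only invokes invariance under $\PGL_2\CC$ and leaves implicit exactly the point you work out — that the orientation-reversing coset acts as $\zeta\mapsto -1/\bar\zeta$ and hence only conjugates the cross ratio, so that $\chi'\in\{\chi,\bar\chi\}$ and the conclusion still follows from the conjugation-closedness of $\KK'[i]$.
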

\begin{proof}
  The cross ratio \eqref{eq:crossratio} is a rational function in its parameters.
  The claim follows from the invariance with respect to the action of $\PGL_2\CC$, Lemma~\ref{lem:beta} and Proposition~\ref{prop:center}.
\end{proof}

We are now ready to prove the Koebe degree analog to Theorem~\ref{thm:Springbornbipyr}.
\begin{theorem}\label{thm:Koebebipyr}
  Let $k\geq 4$ be an integer.
  Then the Koebe degree $\kappa(B_k)$ is at least $\phi(k)/4$.
  In particular, the Koebe degree of a $3$-polytope is not bounded by any constant.
\end{theorem}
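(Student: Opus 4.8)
The plan is to use Lemma~\ref{lem:crossinv} together with the explicit Springborn realization \eqref{eq:Springbornbipyr} of $B_k$ to produce a cross ratio whose algebraic degree is controlled by $\phi(k)$. The key observation is that the contact points of the bipyramid are determined by its combinatorics and the sphere-tangency condition, independently of which particular Koebe realization we choose, up to the action of $\PGL_2\CC$. Since the cross ratio is a $\PGL_2\CC$-invariant (and the group of admissible transformations relating Koebe realizations sits inside $\PGL_2\CC$ via Lemma~\ref{lem:equivalent}), the cross ratio of any four chosen contact points is the \emph{same} rational number or algebraic number for every Koebe realization of $B_k$. Thus I can compute it once, using the symmetric Springborn realization where everything is explicit, and then invoke Lemma~\ref{lem:crossinv} to conclude that its degree bounds $\kappa(B_k)$ from below.

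First I would identify four contact points that capture the $k$-fold symmetry. The natural candidates are the contact points on the four edges joining the two apices to consecutive equatorial vertices, or better, the contact points along the equatorial ``belt'' of the bipyramid. Using stereographic projection \eqref{eq:stereo}, these contact points correspond to points on the projective line $\CC\cup\{\infty\}$. Because the equatorial vertices form a regular $k$-gon (as noted in the Remark following Theorem~\ref{thm:Springbornbipyr}), the corresponding points on $\CC\cup\{\infty\}$ should be essentially $k$th or $2k$th roots of unity, up to a Möbius normalization. I would then compute the cross ratio \eqref{eq:crossratio} of four such roots of unity; by the invariance under homogenization and $\PGL_2\CC$ this cross ratio is expressible in terms of $\zeta_k$ (or $\cos(2\pi/k)$, $\sin(2\pi/k)$), and its real and imaginary parts generate a field whose degree over $\QQ$ is comparable to $\phi(k)$.

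The main obstacle will be the careful bookkeeping of degrees: I must show that the real and imaginary parts of this cross ratio generate a field of degree at least $\phi(k)/4$ over $\QQ$. The cross ratio of four roots of unity lies in $\QQ[\zeta_k]$ (or $\QQ[\zeta_{2k}]$), which has degree $\phi(k)$ (resp.\ $\phi(2k)$) over $\QQ$; passing to the subfield generated by the real and imaginary parts separately, and accounting for the two-fold ambiguity from taking real/imaginary parts and a possible further factor of two from the relation $\phi(2k)=2\phi(k)$ in the even case, yields the loss of a factor of up to $4$. The precise constant $\phi(k)/4$ suggests that I should expect to lose one factor of two from splitting the complex cross ratio into real and imaginary coordinates and another from the square-root or root-of-unity normalization; verifying that the cross ratio is genuinely a primitive enough element of $\QQ[\zeta_k]$ (i.e.\ that it is not accidentally fixed by a large subgroup of the Galois group) is the delicate point. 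I would handle this by writing the cross ratio explicitly as a rational function of $\zeta_k$ and checking directly that the cyclotomic Galois action moves it through at least $\phi(k)/4$ distinct values.

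A minor technical point to address is that Lemma~\ref{lem:crossinv} requires a Koebe realization with \emph{contact points} in $\KK=\QQ[\koebe]$, and the lower bound it yields is the degree of the real and imaginary parts of the cross ratio over $\QQ$; I would verify that the Springborn realization \eqref{eq:Springbornbipyr} indeed has contact points over its coordinate field (which follows from Lemma~\ref{lem:beta}), so that the cross ratio computation is legitimate and the invariance argument transfers the resulting degree bound to \emph{every} Koebe realization, in particular to one of minimal degree. The final statement that $\kappa(B_k)$ is unbounded then follows because $\phi(k)/4\to\infty$ as $k\to\infty$ along, say, the primes.
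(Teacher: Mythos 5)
Your proposal follows essentially the same route as the paper: take four contact points on the equatorial belt of the explicit realization \eqref{eq:Springbornbipyr}, write their cross ratio as a rational function of $\zeta_k$ via stereographic projection, and apply Lemma~\ref{lem:crossinv}. One correction on the delicate point you flag: the factor $1/4$ does not come from splitting into real and imaginary parts (the cross ratio $\alpha$ of these four coplanar equatorial contact points is in fact real) nor from a $\phi(2k)$ versus $\phi(k)$ discrepancy; it comes from the observation that $\zeta_k$ is a root of the nonzero quartic $\alpha(x-1)(x^3-1)-(x^2-1)^2$ over $\QQ(\alpha)$, so $\lvert\QQ(\zeta_k):\QQ(\alpha)\rvert\leq 4$ and hence $\lvert\QQ(\alpha):\QQ\rvert\geq\phi(k)/4$ --- this is the clean way to execute the Galois-orbit check you defer.
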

\begin{proof}
  We consider the Koebe realization \eqref{eq:Springbornbipyr} of the bipyramid $B_k$.
  The four columns of the matrix
  \[
    \begin{pmatrix}
      \cos(\pi/k) & \cos(3\pi/k) &  \cos(5\pi/k) & \cos(7\pi/k) \\
      \sin(\pi/k) & \sin(3\pi/k) & \sin(5\pi/k) & \sin(7\pi/k) \\
      0 & 0 & 0 & 0  
    \end{pmatrix}
  \]
  are contact points, written as images under the stereographic projection from $(0,0,1)$ in homogeneous coordinates.
  Their cross ratio is the real number 
  \[
    \alpha \ = \ \frac{(\zeta_k^2-1)^2}{(\zeta_k-1)(\zeta_k^3-1)} \enspace ,
  \]
  where $\zeta_k$ is the $k$th root of unity $e^{\frac{2i\pi}{k}}$. 
  We have
  \[
    \lvert\QQ(\alpha):\QQ\rvert \ = \frac{\lvert\QQ(\zeta_k):\QQ\rvert}{\lvert\QQ(\zeta_k):\QQ(\alpha)\lvert} \enspace ,
  \]
  and $\zeta_k$ is a root of the polynomial
  $
    \alpha(x-1)(x^3-1)-(x^2-1)^2
  $,
  which is non-zero because the coefficient of $x^2$ does not vanish.
  Therefore, 
  \[
    \lvert\QQ(\alpha):\QQ\rvert \ \geq \ \frac{\phi(k)}{4} \enspace ,
  \]
  and Lemma \ref{lem:crossinv} proves our claim.
\end{proof}

\begin{remark}
  The group $\SO_{1,3}^+\RR$ is a six-dimensional real Lie group, which acts faithfully on Minkowski $4$-space.
  It does not act on the Koebe realization space (of a given $3$-polytope) because of the admissibility conditions.
  However, it does act (faithfully and transitively) on a slightly larger space, which would take unbounded realizations into account.
  Since each Koebe realization $\koebe$ is a compact subset of $\RR^3$, any Lorentz transformation which moves $\koebe$ by at most $\epsilon>0$ in the Hausdorff distance is admissible.
  It follows that the dimension of the Koebe realization space equals $\dim\SO_{1,3}^+\RR=6$.
  This recovers Schramm's dimension count in \cite[Theorem~1.2]{Schramm:1992}.
  Similarly, the $3$-dimensional compact Lie group $\Orth_3\RR$ acts faithfully and transitively on the Springborn realization space, whence its dimension is three.
\end{remark}

\section{Stacked polytopes}
\noindent
A stacked $d$-polytope is a simplicial polytope obtained by starting with a $d$-simplex and successively adding vertices beyond a facet, see \cite[Section 3]{Ziegler:1995}.
In this section we look at Koebe realizations of stacked $3$-polytopes. 
Given three affinely independent points $v_1,v_2$ and $v_3$, we will denote by $(v_1,v_2,v_3)^+$ the half-space cut in $\RR^3$ by the affine hyperplane passing through $v_1,v_2$ and $v_3$ and containing the smaller portion of the Klein ball $I$. If both portions are equal, we pick either one. 

\begin{lemma}\label{lem:3dimsim}
  Let $v_1,v_2$ and $v_3$ be three distinct points in $\QQ^3$.
  Suppose that the edges of the triangle $(v_1,v_2,v_3)$ are tangent to the sphere.
  There exists a Koebe realization of the $3$-simplex containing these three points as vertices and such that the fourth point is contained in the half space $(v_1,v_2,v_3)^+$. 
  That realization has degree one. 
\end{lemma}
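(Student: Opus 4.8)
The plan is to \emph{transport} a known rational Koebe realization of the tetrahedron onto the prescribed base by a projective transformation defined over $\QQ$; rationality of the fourth vertex is then automatic. For the seed take the regular realization $Q_0$ of Example~\ref{ex:simplex}, whose four vertices and six contact points are all rational. Fix one triangular face of $Q_0$ and call the contact points of its three edges $p_1,p_2,p_3\in\Sph^2$.

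First I would observe that the given base already carries rational data on the sphere, and that it is reconstructible from that data. Each base edge joins two rational vertices, so the computation in Lemma~\ref{lem:beta} shows its contact point $\tau_i\in\Sph^2$ is rational; the three of them lie on the circle $c=H\cap\Sph^2$, where $H$ is the (rational) plane through $v_1,v_2,v_3$. The key point is that $v_1,v_2,v_3$ are determined by $\tau_1,\tau_2,\tau_3$: the three contact points fix the plane $H$ and hence the circle $c$, each base edge is the tangent line to $c$ at its contact point taken inside $H$, and each $v_i$ is the intersection of the two such tangents at $i$. Consequently any projective transformation that preserves $\Sph^2$ and sends $(p_1,p_2,p_3)$ to $(\tau_1,\tau_2,\tau_3)$ carries the seed base exactly onto $v_1,v_2,v_3$.

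The heart of the argument is to produce such a transformation over $\QQ$. Stereographic projection sends rational points of $\Sph^2$ to points of $\LL\cup\{\infty\}$ with $\LL=\QQ[i]$, so both triples lie over $\LL$. Since $\PGL_2\LL$ acts sharply triply transitively on $\LL\cup\{\infty\}$, there is a unique $g\in\PGL_2\LL$ with $g(p_k)=\tau_k$. Representing $g$ by a matrix $A\in\GL_2\LL$ and feeding it into the construction of Lemma~\ref{lem:equivalent}, the map $X\mapsto AXA^{*}$ on Hermitian $2\times2$ matrices is $\RR$-linear on $\RR^4$ and rescales $q$ by the positive factor $\lvert\det A\rvert^2$; moreover its matrix has entries in $\LL\cap\RR=\QQ$, since these entries are the real, Hermitian-preserving combinations of the entries of $A$ and $\overline A$. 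This gives a transformation $T_g\in\PGL_4\QQ$ preserving the quadric $\{q=0\}=\Sph^2$ and the ball $I$. Applying $T_g$ to $Q_0$ yields a polytope $Q$ all of whose edges are tangent to $\Sph^2$, whose base contact points are $\tau_1,\tau_2,\tau_3$, and whose base triangle is therefore $v_1,v_2,v_3$; its fourth vertex is $T_g$ applied to the seed apex, hence rational, so $Q$ has degree one.

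It remains to fix the side and check admissibility. The fourth vertex of $Q$ is one of the two completions of the base triangle to a Koebe tetrahedron, in particular a finite point, so $T_g$ is admissible and $Q$ is a genuine bounded tetrahedron. The two completions are interchanged by the hyperbolic reflection across the plane with boundary $c$; in the Klein model $I$ this is the harmonic homology with axis $H$ and center the pole of $H$, which is rational and fixes $v_1,v_2,v_3$ pointwise. Composing $T_g$ with it when necessary places the fourth vertex in the half-space $(v_1,v_2,v_3)^+$ without leaving $\QQ$, which proves the claim. The one step requiring genuine care is the rationality of $T_g$: one must verify the mild extension of Lemma~\ref{lem:equivalent} from $\SL_2\LL$ to $\GL_2\LL$, namely that $X\mapsto AXA^{*}$ still lands in $\PGL_4\QQ$. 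Once this is in place, the rational contact points, the reconstruction of the base from them, and the side-and-admissibility bookkeeping are all routine.
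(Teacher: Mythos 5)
Your argument is correct in outline but takes a genuinely different route from the paper's. The paper works metrically: it first produces the fourth vertex via the Soddy circle of the three tangency circles, then uses the inradius formula of \cite[Theorem 7.2(d)]{hajja2006coincidences} to solve a quadratic for $1/\ell_4$, where $\ell_4^2=\norm{v_4}^2-1$, shows the irrational-looking term $\sqrt{\Delta}\sum_i 1/\ell_i$ is rational because $r\sqrt{1-r^2}\,(\ell_1+\ell_2+\ell_3)$ is a rational multiple of the volume of the simplex on $v_1,v_2,v_3$ and the origin, and finally recovers $v_4$ from a rational linear system. You instead transport the rational seed tetrahedron of Example~\ref{ex:simplex} by the unique element of $\PGL_2\LL$, $\LL=\QQ[i]$, matching the contact points, and verify that the induced projective map lies in $\PGL_4\QQ$. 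Your key algebraic step --- that $X\mapsto AXA^{*}$ for $A\in\GL_2\LL$ scales $q$ by $\lvert\det A\rvert^{2}>0$ and is represented by a matrix over $\LL\cap\RR=\QQ$ --- is right, and is exactly the extension of Lemma~\ref{lem:equivalent} from $\SL_2$ to $\GL_2$ that Remark~\ref{rem:ratKoeb} alludes to; the reconstruction of the base triangle from its three contact points (rational by Lemma~\ref{lem:beta}) is also sound. What your approach buys is a structural explanation of rationality (in effect a proof of the assertion of Remark~\ref{rem:ratKoeb} for the tetrahedron) and it bypasses the inradius computation; what it costs is that existence, boundedness and the side condition are no longer read off from an explicit formula.

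That last point is where your write-up is thinner than it should be. The step \enquote{the fourth vertex of $Q$ is one of the two completions \ldots\ in particular a finite point} does not follow: a projective completion of the tangent base triangle can have its fourth vertex on the far hyperplane (this happens exactly when the corresponding Soddy circle is a great circle), in which case $T_g$ is not admissible and $T_g(Q_0)$ is unbounded. Moreover, finiteness of all four image vertices alone does not yet give admissibility, since the preimage of the far hyperplane could still cut through the seed tetrahedron, so that edges map to complements of segments. What rescues the argument is the geometric fact that the completion on the $(v_1,v_2,v_3)^+$ side sits over a spherical cap strictly smaller than a hemisphere, so its Soddy circle is not maximal and its pole is a finite point beyond the base plane; this is precisely the content of the paper's Soddy-circle paragraph, which you need to import (or re-prove) rather than label routine. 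With that supplied, composing with the rational harmonic homology fixing the base plane, as you do, correctly selects the bounded completion in $(v_1,v_2,v_3)^+$, and the rest of your argument goes through.
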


\begin{proof}
First, let us proof that such a realization exists. Take a point on the sphere contained in $\RR^3\setminus(v_1,v_2,v_3)^+$. Under stereographic projection, the three vertices determine three circles given by the tangency points of lines passing through the vertices and tangent to the sphere. Then, if we take the Soddy circle \cite{Eppstein2001TangentSA} of these three circles and project it back to the sphere, this is not a maximal circumference and uniquely determines a fourth vertex $v_4$ in $(v_1,v_2,v_3)^+$, which gives a Koebe realization.

%SECOND ARGUMENT:Take a Koebe realization $\koebe$ of the $3$-bypiramid $B_3$ constructed by stacking $\Delta_1$ and $\Delta_2$. This exists and we can be given explicitly. We have exactly two transformations in $\Orth^+_{1,3}\RR$ bringing the three  contact points of $B_3$ contained in $\Delta_1\cap\Delta_2$ into $v_1,v_2$ and $v_3$, and we fix one denoted by $T$. If both $T\Delta_1$ and $T\Delta_2$ intersect the plane at infinity, we would also a point in the affine triangle formed by $v_1,v_2$ and $v_3$ at infinity, which is not true.  We conlude that $T$ is admissible with respect to either $\Delta_1$ or $\Delta_2$.

 We set $\ell_i:=\sqrt{\norm{v_i}^2-1}$ for $i=1,2,3$.
  This is the length of the segment between $v_i$ and any of the contact point on the sphere $\Sph^2$ associated with the edges linking $v_i$ to its neighbors.
  The formula \cite[Theorem 7.2(d)]{hajja2006coincidences} refers to simplices of arbitrary dimension.
  Specializing to dimension three  yields
  \[
    r^2 \ = \ \frac{2}{(\sum_{i=1}^{3}\frac{1}{\ell_i})^2-\sum_{i=1}^{3}\frac{1}{\ell_i^2}} \enspace,
  \]
  where $r$ is the radius of the circle inscribed in the triangle $(v_1,v_2,v_3)$.

  Let $v_4$ be the fourth vertex of a Koebe realization of the $3$-simplex, and we know that such a $v_4$ exists thanks to the initial discussion.
  With this we get a value $\ell_4$ also for $v_4$.
  Using the formula \cite[Theorem 7.2(d)]{hajja2006coincidences} again but for four points yields
  \[
    4 \ = \ \frac{2}{r^2}-\frac{1}{\ell_4^2}+2\Big(\sum_{i=1}^3\frac{1}{\ell_i}\Big)\frac{1}{\ell_4}-\sum_{i=1}^3\frac{1}{\ell_i^2} \enspace.
  \]
  In particular, we have
  \[
    \frac{1}{\ell_4} \ = \ \frac{\sum_{i=1}^3\frac{1}{\ell_i}\pm\sqrt{\Delta}}{-1} \enspace,
  \]
  where
  \[
    \Delta \ = \ \Big(\sum_{i=1}^3\frac{1}{\ell_i}\Big)^2+\Big(\frac{2}{r^2}-4-\sum_{i=1}^3\frac{1}{\ell_i^2}\Big) \ = \ \frac{2}{r^2}+\frac{2}{r^2}-4 \ = \ \frac{4}{r^2}(1-r^2) \enspace.
  \]
  Observe that this discriminant is always non-negative. 

  We want to prove that $\ell_4^2$ is rational.
  As $\ell_i^2=\norm{v_i}^2-1$ and $\ell_i\ell_j=\tfrac{1}{2}(\norm{v_i-v_j}^2-\ell_i^2-\ell_j^2)$ are rational, it remains to check the rationality of
  \begin{equation}\label{eq:to-check}
    \sqrt{\Delta}\cdot\sum_{i=1}^3\frac{1}{\ell_i} \enspace .
  \end{equation}
  We have
  \[
    \sqrt{\Delta}\sum_{i=1}^3\frac{1}{\ell_i} \ = \ \frac{2}{r}\sqrt{1-r^2}\,\frac{\ell_1\ell_2+\ell_1\ell_3+\ell_2\ell_3}{\ell_1^2\ell_2^2\ell_3^2}\,\ell_1\ell_2\ell_3
  \]
  and $\ell_1\ell_2\ell_3 = (\ell_1+\ell_2+\ell_3) r^2$.
  So, for verifying the rationality of \eqref{eq:to-check} it suffices to check that
  \[
    r\, \sqrt{1-r^2}\, (\ell_1+\ell_2+\ell_3) \ \in \ \QQ \enspace .
  \]
  Yet this quantity is a rational multiple of the volume of the simplex with vertices $v_1,v_2,v_3$ and the origin, which is therefore rational.
  This shows that $\ell_4^2$ and $\norm{v_4}^2$ are rational.

  The point $v_4$ is the intersection $x$ of the three spheres of center $v_i$ and radius $\ell_i+\ell_4$, for $i=\{1,2,3\}$ and we can write it as solution of a linear system.
  The equality $\norm{x}=\norm{v_4}$ leads to the system
  \[
    \begin{cases}
      (x-v_1)\cdot(x-v_1) \ = \ (\ell_1+\ell_4)^2 \\
      (x-v_2)\cdot(x-v_2) \ = \ (\ell_2+\ell_4)^2 \\
      (x-v_3)\cdot(x-v_3) \ = \ (\ell_3+\ell_4)^2
    \end{cases}
  \]
  of quadratic equations.
  This can be transformed to
  \[
    \begin{cases}
      -2 x\cdot v_1 \ = \ (\ell_1+\ell_4)^2-(\ell_1^2+1)-(\ell_4^2+1) \\
      -2 x\cdot v_2 \ = \ (\ell_2+\ell_4)^2-(\ell_2^2+1)-(\ell_4^2+1) \\
      -2 x\cdot v_3 \ = \ (\ell_3+\ell_4)^2-(\ell_3^2+1)-(\ell_4^2+1) \enspace ,
    \end{cases}
  \]
  which is a system of rational linear equations with a unique solution.
  It follows that $x=v_4$ is rational.
\end{proof}

\begin{figure}\centering
\includegraphics[scale=0.3]{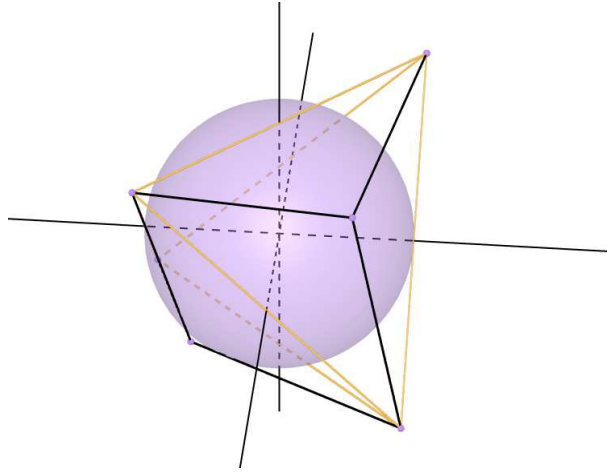}
\caption{A Koebe realization of a stacked polytope, constructed starting from the yellow tetrahedron stacking two more simplices. }
\end{figure}

%If both of the two Koebe realizations of the tetrahedron in Lemma~\ref{lem:3dimsim} put together give a rational Koebe realization of the bipyramid $B_3$. Since the equatorial plane of the bipyramid does not pass through the origin, there is one \emph{big} and one \emph{small} simplex, which are distinguished, e.g., by their volumes. The big simplex is also characterized by the property that it contains the origin.

We will call the Koebe realization found in the previous Lemma the \emph{small} simplex on $v_1,v_2$ and $v_3$.
Equipped with these observations, we are ready to prove our final result.
\begin{theorem}\label{thm:stacked}
  The Koebe degree of any stacked $3$-polytope equals one.
\end{theorem}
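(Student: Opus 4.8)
The plan is to argue by induction on the number of stacking operations, using Lemma~\ref{lem:3dimsim} as the single geometric step and Example~\ref{ex:simplex} as the base case. Concretely, I would maintain the following inductive hypothesis: every stacked $3$-polytope built from $s$ stacking steps admits a \emph{rational} Koebe realization $\koebe$ in which the origin lies in the interior, and in which every facet is a triangle whose three vertices are rational and whose three edges are tangent to $\Sph^2$. For $s=0$ the polytope is the tetrahedron, and Example~\ref{ex:simplex} supplies a rational Koebe realization; it has the origin as its centroid and all facets are of the required form, so the base case holds.

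For the inductive step, suppose $P'$ is obtained from a stacked polytope $P$ by stacking a single vertex beyond a triangular facet $F$, and let $\koebe$ be a rational Koebe realization of $P$ as in the hypothesis. Write $F=(v_1,v_2,v_3)$; by hypothesis $v_1,v_2,v_3\in\QQ^3$ and the edges of $F$ are tangent to $\Sph^2$. I would apply Lemma~\ref{lem:3dimsim} to these three points to obtain a rational point $v_4\in(v_1,v_2,v_3)^+$ for which the small simplex $T=\conv(v_1,v_2,v_3,v_4)$ is a Koebe realization of the $3$-simplex. Since the origin lies in the interior of $\koebe$ and the plane of $F$ is a secant plane meeting $\Sph^2$ in the incircle of $F$ (it passes through the three contact points on the edges of $F$), the half-space $(v_1,v_2,v_3)^+$ cutting off the smaller cap of the unit ball is the side away from the origin, that is, the outward half-space of $F$. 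Hence $v_4$ lies strictly beyond $F$, which is the direction in which a stacking vertex must be placed. I would then set $\koebe':=\conv(\koebe\cup\{v_4\})$ and claim it is a rational Koebe realization of $P'$ satisfying the inductive hypothesis: it is rational because all its vertices are; its old edges remain tangent to $\Sph^2$; and the three new edges $v_1v_4,v_2v_4,v_3v_4$ are tangent because $T$ is a Koebe realization. Moreover the origin stays interior since $\koebe'\supseteq\koebe$, and the three new facets $(v_i,v_j,v_4)$ are again rational triangles tangent to the sphere, so the hypothesis is propagated.

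The crux, and the step I expect to require the most care, is verifying that adding $v_4$ realizes \emph{exactly} the stacking operation, i.e.\ that $v_4$ is beyond $F$ and beyond no other facet, so that $\koebe'$ carries the combinatorial type of $P'$ with no unexpected change of adjacencies. Here I would argue in the circle-packing picture underlying the lemma. Under stereographic projection each vertex $v$ of $\koebe$ corresponds to its horizon circle $D_v$ on $\Sph^2$, and since $P$ is simplicial these circles form a packing whose tangencies encode the edges and whose curvilinear-triangle interstices correspond to the triangular facets. The point $v_4$ produced by the lemma is the \emph{inner} Soddy circle of $D_{v_1},D_{v_2},D_{v_3}$ (this is what the conditions \enquote{not a maximal circumference} and $v_4\in(v_1,v_2,v_3)^+$ select), so $D_{v_4}$ is confined to the single interstice over $F$. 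Consequently $D_{v_4}$ is tangent to precisely $D_{v_1},D_{v_2},D_{v_3}$ and disjoint from every other vertex circle, which means $v_4$ is adjacent in $\koebe'$ to exactly $v_1,v_2,v_3$, the facet $F$ is subdivided into the three triangles $(v_i,v_j,v_4)$, and all remaining faces of $\koebe$ are preserved. This is precisely the stacking of $P'$ on $F$, so $\koebe'$ realizes $P'$ as required. Iterating the step along the stacking sequence of any stacked $3$-polytope yields a rational Koebe realization, whence its Koebe degree equals one.
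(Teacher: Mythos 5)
Your proposal is correct and follows essentially the same route as the paper: induction along the stacking sequence, with Example~\ref{ex:simplex} as the base case and Lemma~\ref{lem:3dimsim} supplying the rational new vertex at each step. The only difference is cosmetic: where you verify the combinatorial type via the circle packing and the inner Soddy circle being confined to the interstice over $F$, the paper argues more directly that the new vertex is the intersection of the three new facet planes, each of which contains a tangent edge and hence yields the required strict inequalities (and does not pass through the origin), which keeps the induction going.
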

\begin{proof}
  A stacked polytope is defined recursively by starting with a simplex and repeated stackings over facets.
  Our proof will follow this inductive process, and it produces exactly one Koebe realization for each intermediate polytope simultaneously.

  We start with the Koebe representation of the tetrahedron in Example~\ref{ex:simplex}.
  Since it is even a Springborn representation it contains the origin in the interior.
  By picking the small simplex Koebe realization on three vertices of this Springborn representation, we can perform the first stacking using Lemma~\ref{lem:3dimsim}, and we arrive at a rational Koebe realization of the bipyramid~$B_3$.

  This can now be repeated for all remaining stackings.
  The only issue left is to make sure that we maintain convexity globally.
  In fact, the unique new vertex of a stacking lies beyond the triangular facet which is stacked over:
  This follows as the new vertex agrees with the intersection of the three new facets added by the stacking; that vertex is rational by Lemma~\ref{lem:3dimsim}.
  None of the new facets is tangent to the sphere, but each one of them contains a tangent edge.
  Consequently, they provide strict inequalities for the vertices of each intermediate stacked polytope, and so we get the next rational Koebe realization.
  None of the new facet hyperplanes passes through the origin, which is why we can continue.
  This finishes the proof.
\end{proof}

\section{Concluding remarks and open questions}
\noindent
Finding the Koebe and Springborn degrees of a $3$-polytope can be a challenging task.
Lemma \ref{lem:crossinv} provides a lower bound of the Koebe degree in terms of the cross ratios of four contact points.
It is an intriguing question whether this could lead to an algorithm.
\begin{question}
  Can the Koebe degree of a $3$-polytope be determined from the degree of the cross ratios of the contact points of some Koebe realization?
\end{question}

Moreover, we currently do not have an example of a polytope for which the two degrees differ.
Therefore, we ask the following. 
\begin{question}
  Given a polytope $P$, what is the precise relationship between its Koebe degree $\kappa(P)$ and its Springborn degree $\sigma(P)$?
  Does $\kappa(P)$ divide $\sigma(P)$?
  Or are they even equal?
\end{question}

A $d$-polytope in $\RR^d$ is called \emph{inscribed} if its vertices lie on the unit sphere, and it is \emph{circumscribed} if its facets are tangent.
More generally, a polytope is \emph{$k$-midscribed} if the tangency condition is satisfied by all faces of dimension~$k$.
These notion have been studied for polytopes of arbitrary dimension~$d$; see Padrol and Ziegler~\cite{PadrolZiegler:2016} for a survey with intriguing questions and conjectures.
In dimension $d=3$ a polytope is $1$-midscribed if at only if it is a Koebe realization.
\begin{question}
  What can be said about algebraic degrees of realizations of $k$-midscribed $d$-polytopes for arbitrary $d$ and $0\leq k\leq d-1$?
\end{question}
It is known that the realization spaces of polytopes constrained by tangency conditions may be empty.
For instance, Hodgson, Rivin and Smith~\cite{HodgsonRivinSmith:1993} characterize the inscribed $3$-polytopes.

% Local Variables: 
% mode: latex
% mode: TeX-PDF
% TeX-master: "springer_main"
% mode: reftex
% mode: font-lock
% fill-column: 120
% buffer-file-coding-system:utf-8-unix
% End: 

\bibliographystyle{amsplain}
\bibliography{main.bib}

\providecommand{\bysame}{\leavevmode\hbox to3em{\hrulefill}\thinspace}
\providecommand{\MR}{\relax\ifhmode\unskip\space\fi MR }
% \MRhref is called by the amsart/book/proc definition of \MR.
\providecommand{\MRhref}[2]{%
  \href{http://www.ams.org/mathscinet-getitem?mr=#1}{#2}
}
\providecommand{\href}[2]{#2}
\begin{thebibliography}{10}

\bibitem{AndersonChahalTop:2110.01355}
Laura Anderson, Jasbir~S. Chahal, and Jaap Top, \emph{The last chapter of the
  {D}isquisitiones of {G}auss}, 2021, Preprint \arXiv{2110.01355}.

\bibitem{Basu:2006}
Saugata Basu, Richard Pollack, and Marie-Fran{\c{c}}oise Roy, \emph{Algorithms
  in real algebraic geometry}, vol.~36, Springer Berlin, Heidelberg, 2006.

\bibitem{BobenkoSpringborn:2004}
Alexander~I. Bobenko and Boris~A. Springborn, \emph{Variational principles for
  circle patterns and {K}oebe's theorem}, Trans. Amer. Math. Soc. \textbf{356}
  (2004), no.~2, 659--689.

\bibitem{Bochnak:1998}
Jacek Bochnak, Michel Coste, and Marie-Fran{\c{c}}oise Roy, \emph{Real
  algebraic geometry}, vol.~36, Springer-Verlag, Berlin Heidelberg, 1998.

\bibitem{BokowskiSturmfels:1989}
J\"{u}rgen Bokowski and Bernd Sturmfels, \emph{Computational synthetic
  geometry}, Lecture s in Mathematics, vol. 1355, Springer-Verlag, Berlin,
  1989.

\bibitem{CAD}
George~E. Collins, \emph{Quantifier elimination for real closed fields by
  cylindrical algebraic decomposition}, Automata theory and formal languages
  ({S}econd {GI} {C}onf., {K}aiserslautern, 1975), Lecture Notes in Comput.
  Sci., Springer, Berlin, 1975, pp.~134--183.

\bibitem{Dieudonne:1971}
Jean~A. Dieudonn\'{e}, \emph{La g\'{e}om\'{e}trie des groupes classiques},
  Springer-Verlag, Berlin-New York, 1971.

\bibitem{Eppstein2001TangentSA}
David Eppstein, \emph{Tangent spheres and triangle centers}, Amer. Math.
  Monthly \textbf{108} (2001), no.~1, 63--66.

\bibitem{Disquisitiones}
Carl~Friedrich Gauss, \emph{Disquisitiones {A}rithmeticae}, Springer-Verlag,
  New York, 1986, Translated and with a preface by Arthur A. Clarke, Revised by
  William C. Waterhouse, Cornelius Greither and A. W. Grootendorst and with a
  preface by Waterhouse.

\bibitem{Slack:2019}
Joao Gouveia, Antonio Macchia, Rekha~R. Thomas, and Amy Wiebe, \emph{The slack
  realization space of a polytope}, SIAM J. Discrete Math. \textbf{33} (2019),
  no.~3, 1637--1653.

\bibitem{hajja2006coincidences}
Mowaffaq Hajja, \emph{Coincidences of centers of edge-incentric, or balloon,
  simplices}, Results in Mathematics \textbf{49} (2006), no.~3-4, 237--263.

\bibitem{HodgsonRivinSmith:1993}
Craig~D. Hodgson, Igor Rivin, and Warren~D. Smith, \emph{A characterization of
  convex hyperbolic polyhedra and of convex polyhedra inscribed in the sphere},
  Bull. Amer. Math. Soc. (N.S.) \textbf{27} (1992), no.~2, 246--251, Erratum:
  ibid. {\bf 28} (1993), no. 1, 213.

\bibitem{Koebe:1936}
Paul Koebe, \emph{Kontaktprobleme der konformen {A}bbildung}, {Ber. S\"achs.
  Akad. Wiss. Leipzig, Math.-phys. Kl.} \textbf{88} (1936), 141--164.

\bibitem{Matiyasevich:1993}
Yuri~V. Matiyasevich, \emph{Hilbert's tenth problem}, Foundations of Computing
  Series, MIT Press, Cambridge, MA, 1993.

\bibitem{Mnev:1988}
N.~E. Mn\"{e}v, \emph{The universality theorems on the classification problem
  of configuration varieties and convex polytopes varieties}, Topology and
  geometry---{R}ohlin {S}eminar, Lecture Notes in Math., vol. 1346, Springer,
  Berlin, 1988, pp.~527--543.

\bibitem{PadrolZiegler:2016}
Arnau Padrol and G\"{u}nter~M. Ziegler, \emph{Six topics on inscribable
  polytopes}, Advances in discrete differential geometry, Springer, Berlin,
  2016, pp.~407--419.

\bibitem{Centered:2020}
Laith Rastanawi, Rainer Sinn, and G\"unter~M. Ziegler, \emph{On the dimensions
  of the realization spaces of polytopes}, Mathematika \textbf{67} (2021),
  no.~2, 342--365.

\bibitem{Richter-Gebert:1996}
J\"{u}rgen Richter-Gebert, \emph{Realization spaces of polytopes}, Lecture
  Notes in Mathematics, vol. 1643, Springer-Verlag, Berlin, 1996.

\bibitem{Schramm:1992}
Oded Schramm, \emph{How to cage an egg}, Selected works of {O}ded {S}chramm.
  {V}olume 1, 2, Sel. Works Probab. Stat., Springer, New York, 2011,
  pp.~87--104.

\bibitem{Springborn:2005}
Boris~A. Springborn, \emph{A unique representation of polyhedral types.
  {C}entering via {M}\"{o}bius transformations}, Math. Z. \textbf{249} (2005),
  no.~3, 513--517.

\bibitem{Steinitz:1922}
Ernst Steinitz, \emph{{E}ncyklop\"adie der mathematischen {W}issenschaften},
  ch.~{P}olyeder und {R}aumteilungen, pp.~1--139, Druck und Verlag von B.G.
  Teubner, Leipzig, 1914--1931.

\bibitem{Thurston:gt3m}
William~P. Thurston, \emph{Three-dimensional geometry and topology. {V}ol. 1},
  Princeton Mathematical Series, vol.~35, Princeton University Press,
  Princeton, NJ, 1997.

\bibitem{TwarockLuque}
Reidun Twarock and Antoni Luque, \emph{Structural puzzles in virology solved
  with an overarching icosahedral design principle}, Nature Communications
  \textbf{10} (2019), no.~4414.

\bibitem{Watkins:1993}
William Watkins and Joel Zeitlin, \emph{The minimal polynomial of
  {$\cos(2\pi/n)$}}, Amer. Math. Monthly \textbf{100} (1993), no.~5, 471--474.

\bibitem{Ziegler:1995}
G\"{u}nter~M. Ziegler, \emph{Lectures on polytopes}, Graduate Texts in
  Mathematics, vol. 152, Springer-Verlag, New York, 1995.

\end{thebibliography}
\end{document}